\newcommand{\Prob}{\mathbb{P}}
\newcommand{\BBE}{\mathbb{E}}
\newcommand{\BBC}{\mathbb{C}}
\newcommand{\BBR}{\mathbb{R}}
\newcommand{\cA}{\mathcal{A}}
\newcommand{\cH}{\mathcal{H}}
\newcommand{\cF}{\mathcal{F}}
\newcommand{\BA}{\mathbf{A}}
\newcommand{\BM}{\mathbf{M}}
\newcommand{\Id}{\mathbf{I}}
\newcommand{\BX}{\mathbf{X}}
\newcommand{\BY}{\mathbf{Y}}
\newcommand{\BH}{\mathbf{H}}
\newcommand{\BW}{\mathbf{W}}
\newcommand{\BSigma}{\mathbf{\Sigma}}
\newcommand{\Ave}{\mathbf{A}}
\newcommand{\Har}{\mathbf{H}}
\newcommand{\fHar}{\mathfrak{h}}
\newcommand{\fPois}{\mathfrak{p}}
\newcommand{\fCovar}{\mathfrak{s}}
\newcommand{\fError}{\mathfrak{e}}
\DeclareMathOperator{\Tr}{Tr}
\DeclareMathOperator{\diff}{d\!}
\newtheorem{theorem}{Theorem}[section]
\newtheorem{corollary}{Corollary}[theorem]
\newtheorem{lemma}[theorem]{Lemma}
\theoremstyle{definition}
\newtheorem{definition}{Definition}
\newtheorem{remark}{Remark}
\title{Harmonic Means of Wishart Random Matrices}
\author{Asad Lodhia}
\address{256 West Hall, 1085 South University Avenue, Ann Arbor MI, 
48109-1107}
\email{alodhia@umich.edu}
\begin{document} 

\begin{abstract}
We use free probability to compute the limiting spectral properties of
the harmonic mean of $n$ i.i.d.\ Wishart random matrices $\BW_i$ whose
limiting aspect ratio is $\gamma \in (0,1)$ when $\BBE[\BW_i] =
\Id$. We demonstrate an interesting phenomenon where the harmonic mean
$\Har$ of the $n$ Wishart matrices is closer in operator norm to
$\BBE[\BW_i]$ than the arithmetic mean $\Ave$ for small $n$, after
which the arithmetic mean is closer. We also prove some results for
the general case where the expectation of the Wishart matrices are not
the identity matrix.
\end{abstract}

\maketitle

\section{Introduction}

Positive definite random matrices are often studied in probability 
theory and statistics. The most famous (and arguably most widely used) 
matrix model supported on the set of positive semidefinite matrices is 
the Wishart ensemble.  Let $\{\BX_i\}_{i=1}^n$ be a sequence of 
centered independent identically distributed matrices that have 
dimension $P\times N$ whose entries have at least two finite moments. 
Suppose each column of $\BX_i$ is an independent $P$-dimensional random 
vector. The matrices
\[
\BW_i:=\frac{\BX_i \BX_i^*}{N}
\]
are called Wishart matrices. If the columns of each $\BX_i$ are i.i.d.\ 
observations from a Gaussian distribution it suffices to specify their 
covariance matrix $\BSigma = \BBE[\BW_i]$ to obtain their distribution. 
In statistics the estimation of such a covariance matrix is a 
fundamental task. Our interest in this paper will be the mathematical 
study of estimates in operator norm of the covariance in the 
high-dimensional regime $\frac{P}{N} \to \gamma \in (0,1)$.

The notational choice in the previous paragraph may seem odd to the 
reader. If the columns of $\BX_i$ are drawn i.i.d., one may combine 
them, say by computing the arithmetic mean
\begin{equation}
\Ave := \frac{\sum_{i=1}^n \BW_i}{n}.
\end{equation}
This reduces the variance by a factor of $n^{-1}$ and is 
equivalent to adjoining the columns of the $\BX_i$ into a single 
$P$-by-$Nn$ matrix, since
\[
\Ave = \frac{\big[ \BX_1, \cdots, \BX_n\big] \big[\BX_1,\cdots, 
\BX_n\big]^*}{Nn}.
\]

In the regime where $P/N\to \gamma\in(0,1)$ the sample 
covariance matrix $\BW_i$ does not converge to its expected value 
$\BSigma$. Instead, when $\BBE \BW_i = \Id$, the spectral measure of 
each $\BW_i$ satisfies the Mar\v{c}enko-Pastur Law with parameter 
$\gamma$:
\begin{equation}
\label{eqn:mpdef}
\rho_{\mathrm{MP},\gamma}(x) := \frac{\sqrt{\big((1 + \sqrt{\gamma})^2
    - x\big)\big(x - (1 - \sqrt{\gamma})^2\big)}}{2\pi\gamma x}
\mathbf{1}_{\big[(1-\sqrt{\gamma})^2,(1+\sqrt{\gamma})^2\big]}(x).
\end{equation}
In fact, under sufficient moment conditions \cite{YBK88}, we have the
stronger result that
\[
\|\BW_i - \Id \| \to \gamma + 2\sqrt{\gamma} \quad \mathrm{a.s.},
\]
where $\|\BM\|$ represents the operator norm of the matrix $\BM$. It
is important to note here that the value of the operator norm in this
particular case is due to the right-edge of the spectrum of the
Mar\v{c}enko-Pastur Law. Subtraction of the matrix $\Id$ shifts all of
the eigenvalues of $\BW_i$ exactly by one and the eigenvalue with
largest absolute value is at the right edge of the spectrum.
Heuristically, our error is due to overestimating the largest
eigenvalue. When we average the $\BW_i$ resulting operator norm bound
becomes
\[
\|\Ave - \Id\| \to
\frac{\gamma}{n} + 2\sqrt{\frac{\gamma}{n}}\qquad \hbox{a.s.}
\]
The above limit follows from our interpretation of the arithmetic mean 
as a sample covariance matrix with aspect ratio $P/Nn \to \gamma/n$.  
Notice that the change in the operator norm error is not simply a 
rescaling by $n^{-1/2}$, even though the entrywise variance has changed 
by $n^{-1}$. The purpose of this paper is to explore an alternative to 
the arithmetic mean that takes into account the positive definiteness 
of $\BW_i$ when $P<N$.

The space of positive definite matrices is a cone and has a natural 
partial ordering. When $\BM_1$ and $\BM_2$ are $P\times P$ positive 
semidefinite matrices, we say
\[
\BM_1 \preceq \BM_2
\]
if and only if $\BM_2 - \BM_1$ is positive semidefinite. Under this
ordering one can show various generalizations of classical
inequalities.  Of particular interest in this paper, if $\BM_1$,
$\ldots$, $\BM_k$ are positive definite (and therefore invertible),
the classic arithmetic mean harmonic mean (AMHM) generalizes as
\cite[Theorem 1]{ST94}
\begin{equation}
\label{eqn:matamhmineq}
k\Big(\BM_1^{-1} + \cdots +  \BM_k^{-1}\Big)^{-1} \preceq \frac{\BM_1 + 
\cdots +\BM_k}{k}.
\end{equation}
The matrix on the left,
\[
k\Big(\BM_1^{-1} + \cdots + \BM_k^{-1}\Big)^{-1},
\]
is the harmonic mean of $\BM_1$, $\ldots$, $\BM_k$. This paper shows 
that $\Ave$ can give \emph{worse} estimates in operator norm than the 
matrix harmonic mean
\begin{equation}
\label{eqn:harmdef}
\Har := n \bigg(\sum_{i=1}^n \BW_i^{-1}\bigg)^{-1}.
\end{equation}
When $\BBE[\BW_i] = \Id$, we show for any $\gamma \in (0,1)$ and the 
operator norm of $\Har - \Id$ is always smaller than $\Ave - \Id$ when 
$n=2$. For general $n\geq 2$, this advantage disappears when $n$ 
exceeds a critical value $n^*(\gamma)$ that is a function only of 
$\gamma$.

A heuristic explanation of this result is the AMHM inequality $\Har 
\preceq \Ave$. We know from our discussion above that $\Ave$ is, in 
some sense, an overestimate of its expectation $\Id$. By taking a 
matrix smaller in the positive definite cone, we are compensating for 
this overestimation. As will be shown below, $\|\Har-\Id\|$ will be the 
absolute value of the \emph{smallest} eigenvalue of $\Har-\Id$, so 
$\Har$ \emph{underestimates} $\Id$. When $n$ is large, the spectral 
measure of $\Har$ approaches a point mass at $(1-\gamma)$ 
whereas the spectral measure of $\Ave$ approaches a point mass 
at $1$ (the spectral measure of $\Id$). This explains why 
eventually, for $n$ large enough, $\Ave$ is a better estimate.

The analysis presented in this paper
is complete for the case where $\BBE[\BW_i]= \Id$ but 
we will be able to comment on Wishart matrices with general non-singular 
covariance, by the observation that if $\BBE[\BW_i] = \Id$, then
\[
\BBE\big[\BSigma^\frac{1}{2}\BW_i \BSigma^{\frac{1}{2}}\big] = 
\BSigma.
\]
This fact implies that
for both the arithmetic and harmonic mean we simply need to multiply 
on both sides by $\BSigma^{1/2}$ to get the arithmetic and 
harmonic mean of a Wishart matrix with a general covariance $\BSigma$.
With some conditions on $\BSigma$, we can ensure that
the result sketched above still holds in this more general case.

\subsection*{Notation} In this paper $\Id$ will be the identity matrix, 
its dimension will be clear from the context. For a matrix $\BM$,
$\|\BM\|$ will always denote its operator norm and $\BM^*$ its
conjugate transpose. Given a set $A$, the function $\mathbf{1}_A(x)$
is the indicator function associated to that set. For a unital
$C^*$-algebra $\cA$, the norm will be denoted $\|\cdot\|_\cA$, the
unit element will be denoted $1_\cA$ and $*$ will denote the
involution. 

\subsection*{Acknowledgements} We are grateful to Alice Guionnet, 
Elizaveta Levina and Jinho Baik for their helpful comments and
suggestions.  We are also extremely grateful to Keith Levin for
reading earlier drafts of the paper and providing helpful comments.
This research was supported through NSF Grant DMS-1646108.
\section{Results and Outline}

For what follows, we will make the following assumption on the
matrices $\BX_i$ that generate $\BW_i$. We need these assumptions
primarily due to our application of Theorem~\ref{thm:strongconv}.
\begin{definition}[Matrix Model]
\label{def:matmodel}
The matrices $\{\BX_i\}_{i=1}^n$ are $P$ by $N$ and their entries are 
i.i.d.\ standard complex Gaussians\footnote{A standard complex Gaussian 
is of the form $\frac{Z_1 + \sqrt{-1} Z_2}{\sqrt{2}}$ where $Z_1$ and 
$Z_2$ are independent standard real Gaussian random variables.} and
\[
\Bigg| \frac{P}{N} - \gamma \Bigg| \leq \frac{K}{P^2},
\]
where $K > 0$ and $\gamma \in (0,1)$ are constants that do not depend
on $P$, $N$ or $n$. For each $i=1,2,\dots,n$,
define $\BW_i = N^{-1}\BX_i \BX_i^*$.
\end{definition}
\begin{figure}
\centering
\includegraphics[height=3in,width=4in]{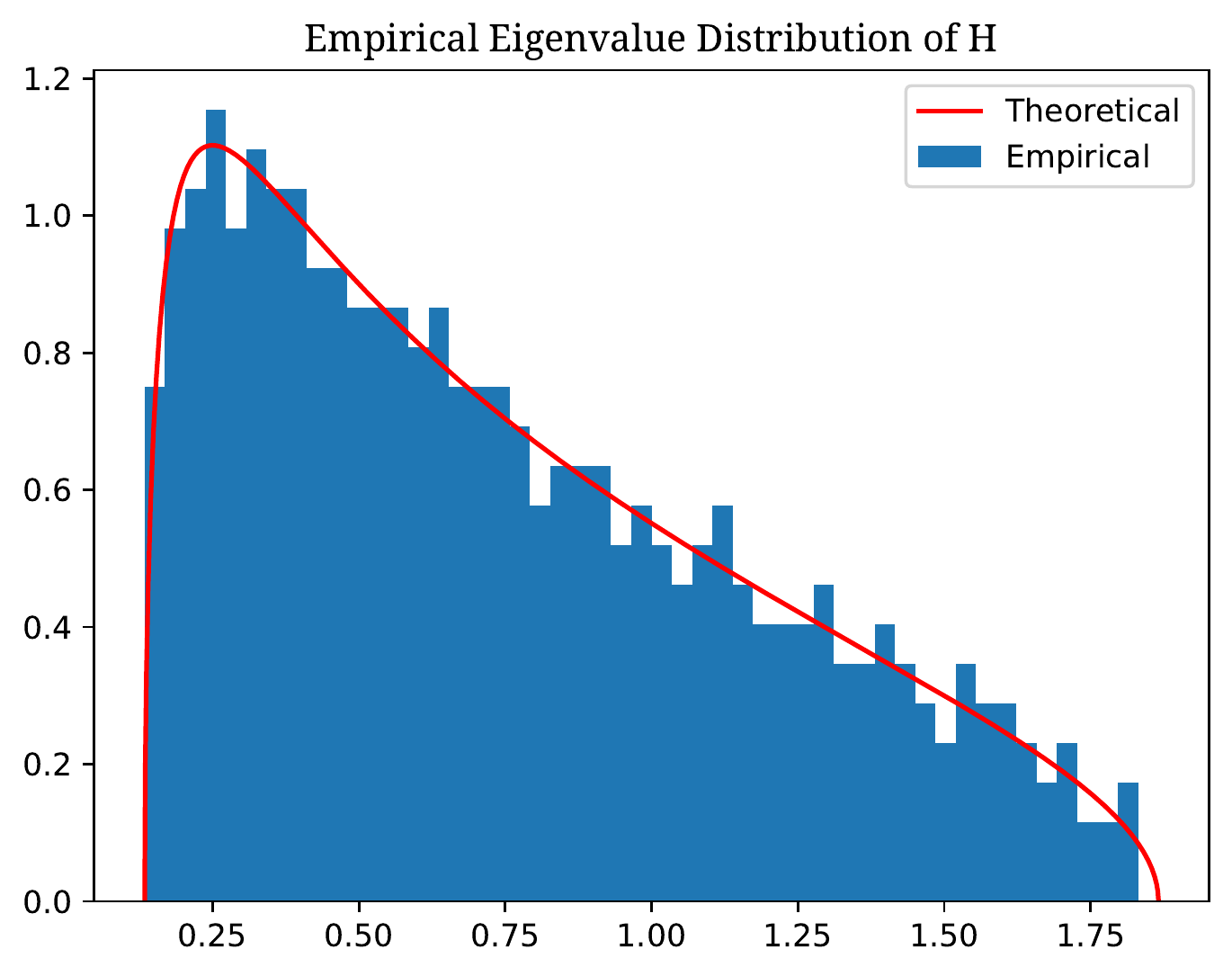}
\caption{\label{fig:harmesd0.5} A normalized histogram of a single 
sample of the Harmonic mean's empirical eigenvalue distribution with 
$P=500$, $N=1000$ and $n=2$ compared to the corresponding limiting 
distribution for $\gamma = 0.5$ and $n=2$.}
\end{figure}
We will prove the following result, which shows the harmonic mean of
Wishart random matrices can be closer in operator norm to the true
covariance than is the operator norm of the arithmetic mean. See
Figure~\ref{fig:harmesd0.5} for a simulation.
\begin{theorem}
\label{theorem:main}
Let $\BW_1$, $\ldots$, $\BW_n$ satisfy Definition~\ref{def:matmodel}.
Then for each fixed $n \geq 2$, the spectral measure of $\Har$ converges
weakly almost surely to the measure with density, i.e.,
\[
\frac{n}{2\pi \gamma x}\sqrt{(e_+ - x)(x-e_-)}\mathbf{1}_{[e_-,e_+]}(x),
\]
where
\[
e_\pm := 1 - \gamma + \frac{2\gamma}{n} \pm 2\sqrt{\frac{\gamma}{n}}
\sqrt{1 - \gamma + \frac{\gamma}{n}}.
\]
Further, we have the convergence:
\[
\lim_{P,N\to\infty}\|\Har - \Id\| \to \gamma - \frac{2\gamma}{n} +
2\sqrt{\frac{\gamma}{n}}\sqrt{1 - \gamma + \frac{\gamma}{n}} \quad
\mathrm{a.s.}
\]
\end{theorem}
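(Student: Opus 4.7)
The strategy is to leverage strong asymptotic freeness (Theorem~\ref{thm:strongconv}) to reduce the problem to a free probability computation. First I would invoke Theorem~\ref{thm:strongconv} to conclude that $\BW_1,\ldots,\BW_n$ are asymptotically strongly free, with each $\BW_i$ converging in $*$-distribution and in norm to a free element of Mar\v{c}enko--Pastur law with parameter $\gamma$. Since $\gamma \in (0,1)$, the support of the limiting law is bounded away from $0$, so continuous functional calculus applied to the strong convergence yields strong convergence of the $\BW_i^{-1}$ to free copies $w_1^{-1},\ldots,w_n^{-1}$ of the push-forward of the Mar\v{c}enko--Pastur law by $x \mapsto 1/x$, and hence of $\Har$ to $h := n\bigl(\sum_{i=1}^n w_i^{-1}\bigr)^{-1}$, in operator norm.

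Next I would compute the distribution of $h$ via Stieltjes and $R$-transforms. Starting from the Mar\v{c}enko--Pastur Stieltjes transform, the transform of each $w_i^{-1}$ is obtained by the standard change-of-variables identity, and then the $R$-transform of $\sum_i w_i^{-1}$ is $n$ times that of $w_1^{-1}$. Solving the resulting algebraic relation gives a quadratic equation for the limiting Stieltjes transform $s(z)$ of $\frac{1}{n}\sum_i w_i^{-1}$, and the Stieltjes transform $g(z)$ of $h$ is recovered from $s$ via another $x \mapsto 1/x$ transformation. Applying the Stieltjes inversion formula to $g(z)$ produces a density that, after simplification, matches the stated form, with the square-root endpoints $e_\pm$ emerging as the zeros of the discriminant of the underlying quadratic. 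A useful sanity check: at $n=1$ the formula must reduce to the Mar\v{c}enko--Pastur density, which it does, since $e_\pm = (1 \pm \sqrt{\gamma})^2$.

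The operator norm claim then follows in two steps. By strong convergence, the extreme eigenvalues of $\Har$ converge almost surely to $e_\pm$, so $\|\Har - \Id\| \to \max(1 - e_-,\, e_+ - 1)$. A direct computation gives $(1 - e_-) - (e_+ - 1) = 2 - (e_+ + e_-) = \frac{2\gamma(n-2)}{n} \geq 0$ for $n \geq 2$, so the maximum is $1 - e_-$, which equals the value in the theorem. I expect the main obstacle to be algebraic bookkeeping: computing the transform of the inverse Mar\v{c}enko--Pastur law, keeping branch cuts consistent, inverting the $R$-transform equation, and recognizing the final density in the stated form. The conceptual inputs (strong asymptotic freeness, free additive convolution, Stieltjes inversion, promotion of edge convergence via strong convergence) are essentially off-the-shelf; the subtlety lies in executing the calculation without sign or branch errors.
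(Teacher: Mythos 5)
Your proposal is correct and follows essentially the same route as the paper: strong asymptotic freeness from Theorem~\ref{thm:strongconv}, passage to inverses, the $R$-transform computation of the additive free convolution of the inverse Mar\v{c}enko--Pastur laws, Stieltjes inversion, and identification of the dominant edge (your explicit check that $2-(e_++e_-)=2\gamma(n-2)/n\geq 0$ is the verification the paper leaves implicit). The only difference is in how the inverses are handled: you invoke continuous functional calculus on the strong convergence, using that the limiting spectrum is bounded away from $0$, whereas the paper makes this quantitative via separate summable tail bounds on $\|\BW_i^{-1}\|$ (Lemma~\ref{lem:polyapprox}) together with explicit Neumann-series polynomial approximations of $\Har$ --- a packaging difference rather than a different argument.
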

\begin{remark}
\label{rem:cutoff}
Note that for small $n$
\[
\lim_{P,N \to \infty}\|\Har - \Id\| = 
\gamma-\frac{2\gamma}{n}+2\sqrt{\frac{\gamma}{n}}\sqrt{1 - \gamma + 
\frac{\gamma}{n}} < \frac{\gamma}{n} + 2\sqrt{\frac{\gamma}{n}} = 
\lim_{P,N\to \infty}\|\Ave - \Id\|,
\]
which is lost after $n$ exceeds a threshold $n^*(\gamma)$. Indeed,
the inequality is always true for $n=2$, where it reads:
\[
\sqrt{2\gamma}\sqrt{1-\frac{\gamma}{2}} < \frac{\gamma}{2} +
\sqrt{2\gamma},
\]
see Figure~\ref{fig:errorcompare} for a comparison of these functions, 
and notice the improvement of $\Har$ is larger as $\gamma$ gets closer 
to 1. Observe that as $n\to \infty$, $e_\pm$ converge to $1- \gamma$ 
which suggests that $\Har$ is somehow ``shrunken'' compared to $\Ave$. 
As mentioned in the Introduction,
\[
\lim_{P,N\to \infty}\|\Har - \Id\|  = 1- e_-
\]
so $\Har$ is off by the identity due to an ``underestimate'' of the
operator norm. 
\end{remark}

\begin{figure}
\centering
\includegraphics[height=3in,width=4in]{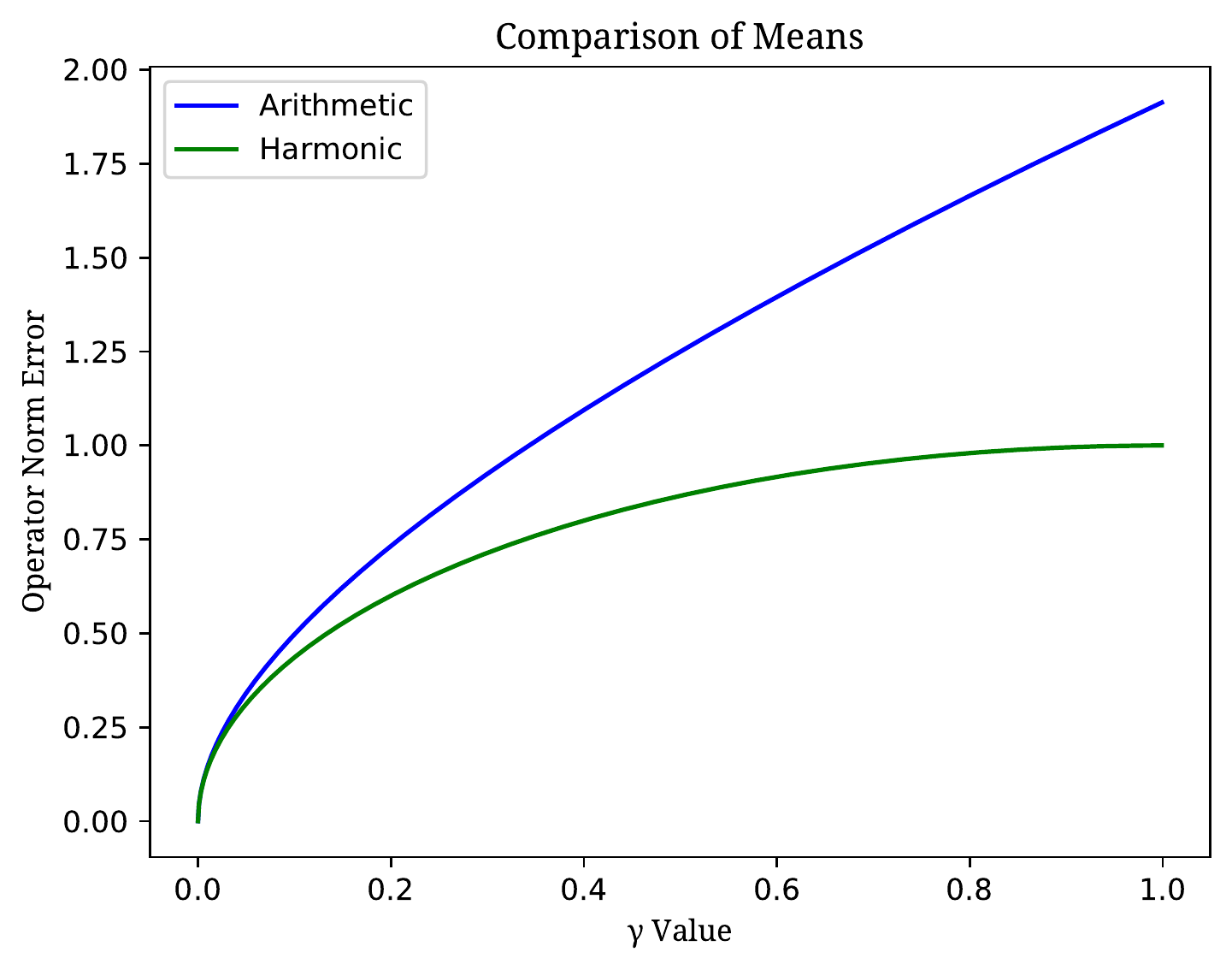}
\caption{\label{fig:errorcompare} A plot comparing the limiting
operator norm error of $\|\Har - \Id\|$ with $\|\Ave -\Id\|$ as 
a function of $\gamma \in (0,1)$ in the case that $n=2$.
}
\end{figure}

The Theorem~\ref{theorem:main} applies to matrices from 
Definition~\ref{def:matmodel}. For applications to statistics and other 
fields, it may be more desirable to have a model for general 
subgaussian real random matrices.

\begin{definition}[Alternative Matrix Model]
\label{def:altmatmodel}
The matrices $\{\BX_i\}_{i=1}^n$ are $P$ by $N$ and their entries are 
i.i.d.\ real subgaussian \footnote{A centered real subgaussian random 
variable $X$ is a random variable such that there exists a $\sigma > 0$ 
such that \[ \BBE[\exp(tX)] \leq \exp\bigg(\frac{\sigma^2 t^2}{2}\bigg) 
\] for  all $t \in \BBR$. The number $\sigma$ is often called the 
subgaussian parameter of $X$.} and
\[
\Bigg| \frac{P}{N} - \gamma \Bigg| \leq \frac{K}{P^2},
\]
where $K > 0$ and $\gamma \in (0,1)$ are constants that do not depend 
on $P$, $N$ or $n$. For each $i=1,2,\dots,n$, define $\BW_i = 
N^{-1}\BX_i \BX_i^*$. 
\end{definition}

A few of the Lemma used to prove Theorem~\ref{theorem:main} carry 
through to the matrices in the Definition~\ref{def:altmatmodel}. This 
strongly suggests Theorem~\ref{theorem:main} should hold for more 
general assumptions on the matrix entries. See 
Remark~\ref{rem:generalization} for a technical discussion that 
clarifies this possible extension.

Another natural question is whether the results above carry over to the 
case where $\BBE[\BW_i]\neq \Id$. A simple submultiplicativity argument 
combined with the above Theorem gives the following result:
\begin{corollary}
\label{cor:submult}
Assume $n\leq n^*(\gamma)$. Let $\BSigma$ be a sequence of deterministic
$P\times P$ positive definite covariance matrices (with $P$-dependence
suppressed) such that 
\[
\limsup_{P,N\to\infty}\frac{ \|\BSigma\|\|\BSigma^{-1}\|\|\Har - \Id\|}
       {\|\Ave- \Id\|}<1,
\]
then
\[
\limsup_{P,N\to\infty}
\frac{\|\BSigma^{\frac{1}{2}}\Har\BSigma^{\frac{1}{2}} -
  \BSigma\|}{\|\BSigma^{\frac{1}{2}}\Ave\BSigma^{\frac{1}{2}} -
  \BSigma\|} < 1 \qquad \mathrm{a.s.}
\]
\end{corollary}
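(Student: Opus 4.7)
The plan is to reduce everything to the identity-covariance case by conjugating by $\BSigma^{\pm 1/2}$, and then squeeze the ratio of interest using submultiplicativity of the operator norm. Specifically, I write
\[
\BSigma^{\frac{1}{2}}\Har\BSigma^{\frac{1}{2}} - \BSigma = \BSigma^{\frac{1}{2}}\bigl(\Har - \Id\bigr)\BSigma^{\frac{1}{2}},
\qquad
\Ave - \Id = \BSigma^{-\frac{1}{2}}\bigl(\BSigma^{\frac{1}{2}}\Ave\BSigma^{\frac{1}{2}} - \BSigma\bigr)\BSigma^{-\frac{1}{2}},
\]
so that submultiplicativity yields the two one-sided bounds
\[
\bigl\|\BSigma^{\frac{1}{2}}\Har\BSigma^{\frac{1}{2}} - \BSigma\bigr\| \leq \|\BSigma\|\,\|\Har - \Id\|,
\qquad
\bigl\|\BSigma^{\frac{1}{2}}\Ave\BSigma^{\frac{1}{2}} - \BSigma\bigr\| \geq \frac{\|\Ave - \Id\|}{\|\BSigma^{-1}\|}.
\]

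Dividing the first bound by the second gives, for every realization,
\[
\frac{\bigl\|\BSigma^{\frac{1}{2}}\Har\BSigma^{\frac{1}{2}} - \BSigma\bigr\|}{\bigl\|\BSigma^{\frac{1}{2}}\Ave\BSigma^{\frac{1}{2}} - \BSigma\bigr\|}
\leq \frac{\|\BSigma\|\,\|\BSigma^{-1}\|\,\|\Har - \Id\|}{\|\Ave - \Id\|}.
\]
Taking $\limsup$ as $P,N\to\infty$ and invoking the hypothesis of the corollary immediately yields an almost sure $\limsup$ strictly less than $1$ on the left-hand side. The only delicate point is that the denominator $\|\Ave-\Id\|$ on the right must stay bounded away from $0$ almost surely, but this follows because (under Definition~\ref{def:matmodel}) $\|\Ave-\Id\|$ converges almost surely to the deterministic positive constant $\gamma/n + 2\sqrt{\gamma/n}$ recorded in the introduction, so the ratio on the right is well-defined for all large $P,N$ a.s.\ and the hypothesis directly controls its $\limsup$.

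There is really no obstacle here beyond correctly pairing $\|\BSigma\|$ with $\Har$ and $\|\BSigma^{-1}\|$ with $\Ave$ (which is why the product $\|\BSigma\|\,\|\BSigma^{-1}\|$, i.e.\ the condition number of $\BSigma$, appears in the hypothesis); the assumption $n\leq n^*(\gamma)$ is used only to guarantee that the reference ratio $\|\Har - \Id\|/\|\Ave - \Id\|$ has a meaningful limit strictly below $1$ in the identity-covariance case, so that the hypothesis is non-vacuous.
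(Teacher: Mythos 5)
Your proof is correct and follows essentially the same route as the paper: submultiplicativity applied to $\BSigma^{\frac{1}{2}}(\Har-\Id)\BSigma^{\frac{1}{2}}$ and to $\BSigma^{-\frac{1}{2}}(\BSigma^{\frac{1}{2}}\Ave\BSigma^{\frac{1}{2}}-\BSigma)\BSigma^{-\frac{1}{2}}$, pairing $\|\BSigma\|$ with the harmonic-mean error and $\|\BSigma^{-1}\|$ with the arithmetic-mean error, then taking the $\limsup$. Your remark that $\|\Ave-\Id\|$ stays bounded away from zero almost surely matches the paper's (terser) observation that the right-hand side is nonzero with probability one.
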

\begin{proof}
Since $\BSigma$ are positive definite, we have 
\[
\big\|\BSigma^\frac{1}{2}\big\|^2 = \|\BSigma\| \quad
\hbox{and}\quad \big\|\BSigma^{-\frac{1}{2}}\big\|^2 =
\big\|\BSigma^{-1}\big\|.
\]
Now, by submultiplicativity of the operator norm, it follows that
\[
\big\|\BSigma^{\frac{1}{2}}\Har\BSigma^{\frac{1}{2}} -
\BSigma\big\| \leq \big\|\BSigma\big\| \|\Har - \Id\| \leq
\big\|\BSigma\big\|\big\|\BSigma^{-1}\big\| \frac{\|\Har -
  \Id\|}{\|\Ave - \Id\|}
\|\BSigma^{\frac{1}{2}}\Ave\BSigma^{\frac{1}{2}} - \BSigma\|,
\]
since with probability one $\BA \neq \Id$ we know the quantity on the
right is non-zero. Hence we can rearrange to obtain the inequality
\[
\frac{\|\BSigma^{\frac{1}{2}}\Har \BSigma^{\frac{1}{2}} -\BSigma\|}
     {\|\BSigma^{\frac{1}{2}}\Ave\BSigma^{\frac{1}{2}} - \BSigma\|}
\leq \frac{\|\BSigma\|\|\BSigma^{-1}\| \|\Har - \Id\|}{\|\Ave-\Id\|},
\]
now taking the $\limsup$ of both sides yields the required result.
\end{proof}
\begin{remark}
The quantity $\|\BSigma\|\|\BSigma^{-1}\|$ is the largest eigenvalue of 
$\BSigma$ divided by the smallest eigenvalue of $\BSigma$. In 
applications, this is often called the \emph{condition number} of 
$\BSigma$.  Suppose that the limit of $\|\BSigma\|\|\BSigma^{-1}\|$ 
exists and is a constant $c\geq 1$. Then, assuming $n=2$ for ease, 
under the assumptions of Theorem~\ref{theorem:main} our required 
inequality for the condition number is
\[
\lim_{P,N\to\infty}\frac{ \|\BSigma\|\|\BSigma^{-1}\|\|\Har-\Id\|} 
{\|\Ave- \Id\|}= c\bigg(\frac{\sqrt{1-\frac{\gamma}{2}}} 
{1+\frac{1}{2}\sqrt{\frac{\gamma}{2}}}\bigg)<1,
\]
which is clearly non-vacuous, for instance when $\gamma = \frac{1}{2}$ 
the inequality requires
\[
c < \frac{5}{4}\sqrt{\frac{4}{3}}\approx 1.44337567\ldots.
\]
\end{remark}

In Section~\ref{sec:generalcovar} we provide the following fixed point
equation for the limiting Stieltjes transform of
$\BSigma^{\frac{1}{2}}\Har\BSigma^{\frac{1}{2}}$ assuming that
$\BSigma$ and $\Har$ as non-commutative random variables converge to a
pair of freely independent random variables (see
Section~\ref{sec:free}, Definition~\ref{def:freepois} and
equation~\eqref{eqn:defFhar} for relevant definitions and
terminology).

\begin{theorem}
\label{theorem:fixed}
Suppose that $(\BH,\BSigma)$ as a pair of non-commutative random 
variables converge in the sense of distribution to a pair 
$(\fHar,\fCovar)$ of non-commutative freely independent random 
variables with the law of $\fHar$ being the spectral measure defined in 
Theorem~\ref{theorem:main} and the law of $\fCovar$ being the limiting 
spectral measure of $\BSigma$ whose cdf we denote as $F$. We assume $F$ 
is supported on the positive reals. Then we have the following limiting 
fixed point equation for the Stieltjes transform of 
$\BSigma^{\frac{1}{2}}\BH\BSigma^{\frac{1}{2}}$, which we denote 
$m_{\fCovar\fHar}(z)$
\[
m_{\fCovar\fHar}(z) = \int_{\BBR^+} \frac{\diff F(x)}{z - x 
\big(\frac{\gamma z m_{\fCovar\fHar}(z)}{n} + 1-\gamma\big)},
\]
and the limiting fixed point equation for the Stieltjes transform of 
$\BSigma^{\frac{1}{2}}\BH\BSigma^{\frac{1}{2}}-\BSigma$, which we 
denote as $m_{\fError}(z)$, is
\[
m_{\fError}(z) = \int_{\BBR^+} \frac{\diff F(x)}{z - 
\frac{x}{S_{\breve\fHar}(zm_{\fError}(z) - 1)}},
\]
where $S_{\breve\fHar}(z)$ is the $S$-transform of $\fHar - 1_\cF$ 
which satisfies the quadratic:
\[
\frac{\gamma z}{n} S_{\breve\fHar}(z)^2 + \gamma\bigg(\frac{1+z}{n} -
1 \bigg) S_{\breve\fHar}(z) - 1 = 0.
\]
\end{theorem}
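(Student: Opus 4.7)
The plan is to derive each of the three displayed identities as a consequence of the freeness of $\fCovar$ and $\fHar$, using three standard tools from free probability: the $R$-transform, the $S$-transform, and Voiculescu's relation between them. The first step is to identify $\fHar$ with a rescaled free Poisson. Comparing the density in Theorem~\ref{theorem:main} with \eqref{eqn:mpdef}, one verifies that $e_\pm = \mu(1\pm\sqrt{\tilde\gamma})^2$ with $\mu := 1-\gamma+\gamma/n$ and $\tilde\gamma := (\gamma/n)/\mu$, so $\fHar$ is distributed as $\mu\fW$ for a free Poisson element $\fW$ of parameter $\tilde\gamma$. This yields
\[
S_\fHar(z) = \frac{1}{\mu(1+\tilde\gamma z)} = \frac{1}{(1-\gamma)+\gamma(1+z)/n}, \qquad R_\fHar(z) = \frac{\mu}{1-(\gamma/n)z}.
\]

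For the quadratic satisfied by $S_{\breve\fHar}$, I would use additivity of the $R$-transform under free additive convolution, together with $R_{1_\cF}(z)=1$, to obtain
\[
R_{\breve\fHar}(z) = R_\fHar(z) - 1 = \frac{(\gamma/n)(z+1-n)}{1-(\gamma/n)z}.
\]
Voiculescu's identity $R_{\breve\fHar}(z)\,S_{\breve\fHar}(zR_{\breve\fHar}(z)) = 1$, with the substitution $w := zR_{\breve\fHar}(z)$, yields both $S_{\breve\fHar}(w) = 1/R_{\breve\fHar}(z)$ and $z = wS_{\breve\fHar}(w)$; eliminating $z$ between these relations produces the claimed quadratic after routine algebra.

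The two Stieltjes transform fixed point equations then follow from a single general identity: for free $a,b$ with $a \geq 0$ of spectral distribution $F_a$,
\[
m_{ab}(z) = \int_{\BBR^+} \frac{\diff F_a(t)}{z - t/S_b(zm_{ab}(z)-1)}.
\]
The derivation is short: multiplicativity $S_{ab}=S_aS_b$ together with $\chi_x(z):=zS_x(z)/(1+z)$ gives $\chi_{ab}(z) = \chi_a(z)S_b(z)$, and combined with $\psi_x(1/z)=zm_x(z)-1$, the substitutions $u=1/z$ and $\zeta := \psi_{ab}(u) = zm_{ab}(z)-1$ produce $m_a(zS_b(\zeta)) = m_{ab}(z)/S_b(\zeta)$, which upon expanding the integral defining $m_a$ is the displayed identity. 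Applying this with $(a,b) = (\fCovar,\fHar)$ and substituting $1/S_\fHar(\zeta) = (1-\gamma)+(\gamma/n)zm_{\fCovar\fHar}(z)$ from the first step yields the first equation, using that $\fCovar^{1/2}\fHar\fCovar^{1/2}$ and $\fCovar\fHar$ have the same non-commutative distribution. Applying it instead with $(a,b) = (\fCovar,\breve\fHar)$, after noting $\fError = \fCovar^{1/2}\breve\fHar\fCovar^{1/2}$ and that shifts preserve freeness, yields the second equation directly.

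The principal technical obstacle is the clean assembly of the general identity for $m_{ab}$ under the conventions used in the paper; the algebra is short but sign and compositional conventions for $m$, $\psi$, $\chi$, and $S$ must be tracked carefully. The remaining steps—the density matching that identifies $\fHar$ as a rescaled free Poisson, and the $R$-to-$S$ conversion producing the quadratic—are direct routine computations.
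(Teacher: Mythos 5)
Your proposal is correct, and its backbone is the same as the paper's: derive the general identity $m_{ab}(z) = \int \diff F_a(t)\big/\big(z - t/S_b(zm_{ab}(z)-1)\big)$ from the multiplicativity $S_{ab}=S_aS_b$ together with the relations linking $m$, $g$ (your $\psi$) and $\ell$ (your $\chi$) -- this is exactly the paper's equation \eqref{eqn:fixpoint1} -- and then specialize $b$ to $\fHar$ and to $\breve\fHar$. Where you genuinely diverge is in how the two $S$-transforms are obtained. The paper starts from the quadratic \eqref{eqn:harmonicmeanstieltj} for $m_\fHar$ (and its shift $m_{\breve\fHar}(z)=m_\fHar(z+1)$), rewrites it as an equation for $g$, substitutes $\ell$, and solves. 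You instead recognize $\fHar$ as the dilation by $\mu=1-\gamma+\gamma/n$ of a free Poisson of parameter $(\gamma/n)/\mu$ -- a correct identification, since both densities have the form $Cx^{-1}\sqrt{(e_+-x)(x-e_-)}$ with matching edges, and the constant is then forced by normalization -- read off $S_\fHar$ and $R_\fHar$ in closed form, and obtain $S_{\breve\fHar}$ from $R_{\breve\fHar}=R_\fHar-1$ via the duality $S(zR(z))=1/R(z)$. I checked the algebra: your $R_{\breve\fHar}(z)=(\gamma/n)(z+1-n)/(1-(\gamma/n)z)$ does reproduce the stated quadratic, and $1/S_\fHar(zm-1)=(\gamma/n)zm+1-\gamma$ gives the first fixed-point equation. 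Your route avoids rederiving the quadratics and is arguably more conceptual; its extra obligations are the density identification and the fact that $\nu(\breve\fHar)\neq 0$ so that $S_{\breve\fHar}$ is defined (immediate from $R_{\breve\fHar}(0)=(\gamma/n)(1-n)\neq 0$ in your setup; the paper checks it by computing $\nu(\fHar)=n(e_+-e_-)^2/(16\gamma)\neq 1$ directly). One step both arguments need and you mention only in passing: traciality of $\nu$, which justifies replacing $\fCovar^{1/2}\fHar\fCovar^{1/2}$ and $\fCovar^{1/2}\breve\fHar\fCovar^{1/2}$ by $\fCovar\fHar$ and $\fCovar\breve\fHar$ in distribution.
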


By Corollary~\ref{cor:submult}, it stands to reason that the 
improvement of the harmonic mean over the arithmetic mean in operator 
norm should be true for a wide range of covariance $\BSigma$. By the 
above fixed point characterization, we expect this improvement should 
only depend on the limiting distribution $\diff F$ of $\BSigma$. In 
future investigations we hope to characterize the role of $\diff F$ in 
the phenomenon described in Theorem~\ref{theorem:main} and 
Remark~\ref{rem:cutoff}. 

\subsection*{Outline}
The paper is organized as follows: Section~\ref{sec:free} provides 
relevant background terminology and results from free probability 
theory needed to understand the proof of Theorem~\ref{theorem:main} and 
Theorem~\ref{theorem:fixed}. Section~\ref{sec:opconvergence} states and 
proves Lemma~\ref{lem:keylemma}, which guarantees the operator norm 
convergence in Theorem~\ref{theorem:main}. 
Section~\ref{sec:proofofmain} gives the proof of 
Theorem~\ref{theorem:main}, which is reduced to a calculation when 
Lemma~\ref{lem:keylemma} is taken as given. 
Section~\ref{sec:generalcovar} gives the proof of 
Theorem~\ref{theorem:fixed}.
\section{Free Probability Theory}
\label{sec:free}

In order to prove the main results of this paper, we require some
tools from the theory of free probability. Free probability is a
generalization of classical probability invented by Dan Voiculescu in
the 1980s for the purpose of investigating some properties of operator
algebras \cite{V85}. We require this theory because the sequence of
$\{\BW_i\}_{i=1}^n$ given in Definition~\ref{def:matmodel} behave as
the ``joint law'' of a collection of non-commutative random variables
(see Definition~\ref{def:freepois}).  In Section~\ref{sec:proofofmain}
we will use this fact to directly compute the limiting spectral
measure of the harmonic mean $\Har$. Our primary references for the
exposition in this section are \cite[Chapter 5]{AGZ10} and
\cite[Chapters 1--7]{NS06}.

Let $(\cA, \|\cdot\|_\cA, *)$ denote a unital $C^*$-algebra with
involution $*$. This means $\cA$ is a complex vector space equipped
with a complete norm $\|\cdot \|_\cA$ (i.e., $\cA$ is a Banach space),
a bilinear product
\[	
(x,y)\in \cA\times \cA \mapsto xy \in \cA.
\]
and a unit element
\[
1_\cA \in \cA \hbox{ such that }1_\cA x = x, \hbox{ for all } x \in 
\cA.
\]
$\cA$ is a unital Banach algebra if in addition the norm satisfies
\begin{align*}
\|1_\cA\|_\cA &= 1,\\
\|ab\|_\cA &\leq \|a\|_\cA\|b\|_\cA.
\end{align*}
When $\cA$ has an involution operation
\[
a\in \cA \mapsto a^* \in \cA  \quad\hbox{and}\quad (a^*)^* = a
\]
which satisfies for all $a$, $b \in \cA$ and $\lambda \in \BBC$
\begin{align*}
(a + b)^* &= a^* + b^*, \\ (ab)^* &= b^*a^*,\\ (\lambda a)^* &= 
\bar{\lambda} a^*,\\ \|a^*a\|_\cA &= \|a\|^2_\cA,
\end{align*}
then we say that $\cA$ is a unital $C^*$-algebra. An element $a \in 
\cA$ of a $C^*$-algebra is invertible if there exists a $b$ such that 
$a b = b a = 1_\cA$. Notice that the algebraic structure of $\cA$ 
allows us to consider non-commutative polynomials over elements in 
$\cA$. The subalgebra of non-commutative polynomials in formal 
variables $x_1$, $\ldots$, $x_n$ will be denoted $\BBC\langle 
x_1,\ldots, x_n\rangle$.

If $\cA$ is a $C^*$-algebra, then for each $a \in \cA$ the spectrum of 
$a$ can be defined by
\[
\mathrm{spec}(a) = \{\lambda \in \BBC : (\lambda 1_\cA - a) \hbox{ is
not invertible}\},
\]
we can say an element in $\cA$ is non-negative, written $a \succeq_\cA 
0$, if $a^* = a$ and its spectrum is non-negative. Note that for the 
$C^*$-algebra $\mathrm{Mat}_P(\BBC)$ of $P$-by-$P$ matrices, this is 
identical to the definition of a positive-semidefinite matrix.

To apply free probability to our problem of interest, we need the 
notion of a $C^*$-probability space. A non-commutative 
$C^*$-probability space $(\cA, \|\cdot\|_\cA,*, \phi)$ is the unital 
$C^*$-algebra $(\cA,\|\cdot\|_\cA, *)$ equipped with a linear map
\[
\phi : \cA \mapsto \BBC
\]
satisfying $\phi(1_\cA) = 1$ and $\phi(a) \geq 0$ whenever $a 
\succeq_\cA 0$. Such a $\phi$ is called a {\em state}. If $\phi(ab) = 
\phi(ba)$ for every $a$, $b\in \cA$, then $\phi$ is called a {\em 
tracial} state. Finally, if for every $a \in \cA$
\[
\phi\big[(aa^*)\big]= 0 \implies a = 0,
\]
then $\phi$ is a {\em faithful} tracial state \footnote{For a faithful 
tracial state, the operator norm for any $a \in \cA$ can be recovered 
by taking a limit: \[ \lim_{k\to\infty} 
\phi\big((aa^*)^k\big)^{\frac{1}{2k}}=\|a\|_\cA, \] see 
\cite[Proposition 3.17]{NS06} for a proof.}.

Elements of $\cA$ are called non-commutative random variables, and for
any collection $a_1,\ldots, a_m \in \cA$, their joint law is the
map
\[
\mu_{a_1,\ldots,a_m}\big[Q(x_1,\ldots,x_m)\big] :=
\phi\big[Q(a_1,\ldots,a_m)\big],
\]
where $Q \in \BBC\langle x_1,\ldots,x_m\rangle$.

The most important $C^*$-probability space will be 
$(\mathrm{Mat}_P(\BBC),\|\cdot\|,*,\varphi_P)$ where
\begin{equation}
\label{eqn:trace}
\varphi_P(a):= \frac{1}{P} \Tr a.
\end{equation}
When $a$ is a normal matrix, $\varphi_P(a)$ is the integral over the 
normalized spectral measure of $a$:
\[
\varphi_P(a) = \int_\BBC x\bigg\{ \frac{1}{P}\sum_{i=1}^P
\delta_{\lambda_i(a)}(\diff x)\bigg\},
\]
where $\lambda_j(a) \in \BBC$ are the eigenvalues of $a$.

For non-commutative random variables, there is a notion of convergence 
in distribution as well as an analogue of independence called 
\emph{free} independence. Let $(\cA_m,\|\cdot\|_{\cA_m}, *, \phi_m)$ 
for $m\geq 1$ and $(\cA,\|\cdot\|,*,\phi)$ be a collection of 
non-commutative $C^*$-probability spaces. Suppose that for each $m$, 
$a_{m,1}$,$\ldots$, $a_{m,k} \in \cA_m$ is a collection of 
non-commutative random variables and let $a_1$, $\ldots$, $a_k \in \cA$ 
be a fixed collection of non-commutative random variables. We say 
$a_{m,1}$, $\ldots$, $a_{m,k}$ converge in distribution to $a_1$, 
$\ldots$, $a_k$ if for every non-commutative polynomial $Q \in 
\BBC\langle x_1,\ldots,x_k\rangle$,
\[
\lim_{m\to\infty}\mu_{a_{m,1},\ldots,a_{m,k}}\big[Q(x_1,\ldots,x_k)\big] 
= \mu_{a_1,\ldots,a_k}\big[Q(x_1,\ldots,x_k)\big].
\]
A sequence of non-commutative random variables $a_1$, $\ldots$, $a_k$
are freely independent if for any polynomials $Q_1$, $\ldots$, $Q_k$,
we have
\[
\phi\bigg[\prod_{i=1}^k\Big\{Q_i(a_i)-\phi\big[Q_i(a_i)\big]\Big\}\bigg]
=0,
\]
we say a sequence of non-commutative random variables $a_{m,1}$,
$\ldots$, $a_{m,k} \in \cA_m$ are asymptotically freely independent if
they converge in distribution to freely independent non-commutative
random variables $a_1$, $\ldots$, $a_m \in \cA$.

The random matrices $\BW_i$, when viewed as a sequence of random 
variables taking values in the $C^*$-probability space 
$(\mathrm{Mat}_P(\BBC),\|\cdot\|,*,\varphi_P)$, converge almost surely 
in the sense of distribution to a collection of non-commutative random 
variables $\fPois_1,\ldots,\fPois_n$:
\begin{equation}
\label{eqn:limitsoftrace}
\mu_{\BW_1,\ldots,\BW_n}\big[Q(x_1,\ldots,x_n)\big] \to 
\mu_{\fPois_1,\ldots,\fPois_n}\big[Q(x_1,\ldots,x_n)\big]\quad 
\mathrm{a.s.}
\end{equation}
We define the $\fPois_j$ and the state $\nu$ below.
\begin{definition}
\label{def:freepois}
Let $(\cF,\|\cdot\|_{\cF},*,\nu)$ be a $C^*$-algebra with faithful 
tracial state $\nu$ and non-commutative random variables $\fPois_1$, 
$\ldots\,$, $\fPois_n \in \cF$ that are self-adjoint, non-negative, 
freely independent and satisfy
\[
\nu(\fPois_i^k) = \int x^k \rho_{\mathrm{MP},\gamma}(\diff x),
\]
where $\rho_{\mathrm{MP},\gamma}$ is the Mar\v{c}enko-Pastur Law with 
parameter $\gamma$ defined in \eqref{eqn:mpdef}. The $\fPois_j$ are 
called \emph{free Poisson} non-commutative random variables.
\end{definition}
The $C^*$-probability space defined above is guaranteed to exist due
to a functional analytic construction called the free product
\cite[Section 5.2--5.3]{AGZ10}. In fact, it is easier for us to assume
we have this construction in hand for what follows
below. Specifically, there exists a Hilbert space $\cH$ and a
subalgebra $\cF$ in the space of bounded linear operators on $\cH$,
denoted $B(\cH)$, such that the $C^*$-algebra in Definition
\ref{def:freepois} is $\cF$ equipped with the operator norm and the
involution is the mapping that takes an operator to its
adjoint. Furthermore there is a $\zeta \in \cH$ such that
\[
\nu(\mathfrak{a}) = \langle \zeta, \mathfrak{a} 
\zeta\rangle\quad\hbox{for all}\quad \mathfrak{a} \in \cF.
\]
In particular, the spectral measure of each $\mathfrak{\fPois_i}$ is
$\rho_{\mathrm{MP},\gamma}$, see \cite[Theorem 5.2.24]{AGZ10}.

In the next section, we will use a result from \cite{CDM07} in addition 
to concentration results in \cite{V18,RV08} to show that the spectral 
measure of the harmonic mean $\Har$ converges to the law of the 
non-commutative random variable
\begin{equation}
\label{eqn:defFhar}
\fHar := n \big(\fPois_1^{-1} + \cdots + \fPois_n^{-1}\big)^{-1}.
\end{equation}
In addition, we will be able to show $\|\Har - \Id\|$ converges almost 
surely to $\|\fHar - 1_\cF\|_\cF$. First, however, we must establish 
the existence of $\fHar$.
\begin{lemma}
\label{lem:harmofpoiss}
The non-commutative random variable $\fHar$ in \eqref{eqn:defFhar} is 
well-defined and can be approximated by a sequence of non-commutative 
polynomials in $\{\fPois_1,\dots,\fPois_n\}$. 
\end{lemma}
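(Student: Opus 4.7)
The plan is to handle well-definedness via the continuous functional calculus in $\cF$ and then construct the non-commutative polynomial approximation via Stone--Weierstrass. First I would invoke the identification of $\sigma(\fPois_i)$ with the support of the Mar\v{c}enko-Pastur law (by \cite[Theorem 5.2.24]{AGZ10}): namely $\sigma(\fPois_i) \subset [a,b]$ with $a := (1-\sqrt{\gamma})^2 > 0$ and $b := (1+\sqrt{\gamma})^2$, the positivity of $a$ relying crucially on $\gamma \in (0,1)$. Each $\fPois_i$ is therefore invertible in $\cF$ with $\|\fPois_i^{-1}\|_\cF \leq 1/a$, and $\fPois_i^{-1}$ is obtained from the continuous functional calculus applied to $t \mapsto 1/t$. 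Since this function is continuous on $[a,b]$, Stone--Weierstrass yields polynomials $p_m$ with $\sup_{t \in [a,b]}|1/t - p_m(t)| \to 0$, and since the functional calculus is an isometric $*$-homomorphism on $C(\sigma(\fPois_i))$, we get $\|\fPois_i^{-1} - p_m(\fPois_i)\|_\cF \to 0$.

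Next I would analyze the sum $S := \sum_{i=1}^n \fPois_i^{-1}$. It is self-adjoint, and the elementary fact that $A \succeq \alpha\, 1_\cF$ and $B \succeq \beta\, 1_\cF$ imply $A + B \succeq (\alpha + \beta)\, 1_\cF$ (valid even for non-commuting positive elements, since positivity is preserved under sums) together with the analogous upper bound gives $\sigma(S) \subset [n/b,\, n/a]$. Consequently $S$ is invertible in $\cF$, so $\fHar = n S^{-1}$ is a well-defined element of $\cF$ with $\|\fHar\|_\cF \leq b$, which establishes the well-definedness part of the lemma. A second application of Stone--Weierstrass, now to the function $s \mapsto 1/s$ on $[n/b,\, n/a]$, produces polynomials $q_\ell$ with $\|S^{-1} - q_\ell(S)\|_\cF \to 0$.

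For the polynomial-approximation part, set $Q_m := \sum_{i=1}^n p_m(\fPois_i)$, which is a non-commutative polynomial in $\fPois_1,\dots,\fPois_n$ satisfying $\|S - Q_m\|_\cF \to 0$ and $\sup_m \|Q_m\|_\cF < \infty$. Using the standard estimate $\|A^k - B^k\|_\cF \leq k \max(\|A\|_\cF, \|B\|_\cF)^{k-1}\|A - B\|_\cF$, one obtains $\|q_\ell(S) - q_\ell(Q_m)\|_\cF \to 0$ as $m \to \infty$ for each fixed $\ell$. A standard diagonal argument then produces a subsequence $\ell(m) \to \infty$ such that $n\, q_{\ell(m)}(Q_m) \to \fHar$ in operator norm; since each $q_{\ell(m)}(Q_m)$ is a non-commutative polynomial in the $\fPois_i$, this gives the required approximation. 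The one step requiring care is the diagonalization: $\ell(m)$ must be chosen to grow slowly enough relative to the convergence rate $\|S - Q_m\|_\cF \to 0$ so that the degree-dependent constants appearing in the polynomial estimate for $q_{\ell(m)}$ are absorbed, but this is routine once both rates are made quantitative, and no genuine obstacle appears.
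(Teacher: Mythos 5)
Your proposal is correct and follows essentially the same route as the paper: establish invertibility of each $\fPois_i$ and of the sum $\sum_i \fPois_i^{-1}$ from the fact that their spectra are bounded away from zero, then approximate the two inversion maps by polynomials via the functional calculus and compose the approximations. The only difference is cosmetic --- the paper produces the approximating polynomials explicitly as truncated Neumann (geometric) series with the quantitative error bound $\Delta/2^{m+1}$ (which it reuses later for the random-matrix version in Lemma~\ref{lem:keylemma}), whereas you obtain them abstractly from Stone--Weierstrass and a diagonal argument.
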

\begin{proof}
A simple proof of this property comes directly from the fact that each 
of our $\fPois_j$ are represented as bounded linear operators on a 
Hilbert space $\cH$.  Since their spectral measure is 
$\rho_{\mathrm{MP},\gamma}$ which is supported on the positive reals, 
they are all invertible so each $\fPois_j$ is invertible and so is the 
sum
\[
\fPois_1^{-1} + \cdots + \fPois_n^{-1}.
\]
We may approximate $\fHar$ with non-commutative polynomials in 
$\fPois_i$ by utilizing the Neumann series. Let 
\[
\Delta := \max\big\{\|\fPois_i\|_\cF, \|\fPois_i^{-1}\|_\cF, 
\|\fHar\|_\cF, \|\fHar^{-1}\|_\cF\big\},
\]
now consider the partial sum of the geometric series
\[
\frac{1}{2\Delta}\sum_{k=0}^m
\bigg(1_\cF - \frac{\fHar^{-1}}{2\Delta}\bigg)^k
\]
by definition of $\Delta$, along with usual bounds on geometric series
we have
\[
\bigg\|\fHar - \frac{1}{2\Delta}\sum_{k=0}^m \bigg(1_\cF - 
\frac{\fHar^{-1}}{2\Delta}\bigg)^k\bigg\|_\cF= \bigg\|\fHar\bigg(1_\cF 
- \frac{\fHar^{-1}}{2\Delta}\bigg)^{m+1}\bigg\|_\cF \leq 
\frac{\Delta}{2^{m+1}},
\]
which goes to $0$ as $m\to\infty$. Similarly, each $\fPois_i^{-1}$ can
be expanded as the infinite series
\[
\fPois_i^{-1} = \frac{1}{2\Delta}\sum_{k=0}^\infty 
\bigg(1_\cF-\frac{\fPois_i}{2\Delta}\bigg)^k,
\]
with similar error bounds as the expansion for $\fHar$. Since 
$\fHar^{-1}$ is the sum of $\fPois_i^{-1}$ we need only insert the 
truncated geometric series of $\fPois_i^{-1}$ into the truncated 
geometric series for $\fHar$ to get a non-commutative polynomial in 
$\fPois_i$ that approximates $\fHar$ in the norm $\|\cdot\|_\cF$. 
\end{proof}

The polynomial approximation in the proof above will be used again in 
the next section and is the main technical ingredient in addition to 
Theorem~\ref{thm:strongconv} below to establish the operator norm 
convergence of $\Har$.

\section{Strong Convergence of the Harmonic Mean} 
\label{sec:opconvergence}
The following Theorem from \cite{CDM07} will be our main tool for 
obtaining explicit formulas for the limiting operator norm of $\Har- 
\Id$.

\begin{theorem}
\label{thm:strongconv}
Let $\{\BW_i\}_{i=1}^n$ satisfy Definition~\ref{def:matmodel}. Then 
$\BW_i$ are asymptotically free and converge in the strong sense to 
freely independent Poisson random variables $\fPois_1,\ldots, 
\fPois_n$. This means, for any fixed polynomial $Q \in \BBC\langle 
x_1,\ldots, x_n\rangle$, in addition to the convergence
\[
\frac{1}{P} \Tr Q(\BW_1,\ldots, \BW_n) \to 
\nu\big[Q(\fPois_1,\ldots,\fPois_n)\big] \quad \mathrm{a.s.}
\]
we have the convergence
\[
\|Q(\BW_1,\ldots,\BW_n)\| \to \|Q(\fPois_1,\ldots,\fPois_n)\|_\cF \quad
\mathrm{a.s.}
\]
\end{theorem}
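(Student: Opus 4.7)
The plan is to decompose the theorem into two pieces: weak asymptotic freeness (convergence of normalized traces) and strong convergence (convergence of operator norms). The first piece is by now classical. Each $\BW_i = N^{-1}\BX_i\BX_i^*$ has a distribution invariant under conjugation by unitary matrices, so by Voiculescu's theorem on asymptotic freeness of independent unitarily invariant ensembles $\{\BW_i\}$ are asymptotically free, and each $\BW_i$ individually has limiting spectral measure equal to the Mar\v{c}enko--Pastur law with parameter $\gamma$. Combining these facts gives the almost-sure limit of $P^{-1}\Tr Q(\BW_1,\ldots,\BW_n)$ via the moment method: write $\BBE\Tr Q$ as a sum over diagrams arising from Wick's formula, identify the leading contribution as non-crossing pairings (the free-cumulant interpretation of the free Poisson law), and upgrade expectation to almost-sure convergence using a Poincar\'e-type variance bound for the Gaussian measure. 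The hypothesis $|P/N-\gamma|\leq K/P^2$ ensures the aspect-ratio corrections are negligible in the limit.

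The strong convergence statement is the harder half and is essentially the content of \cite{CDM07}; I would follow the Haagerup--Thorbj\o rnsen linearization strategy adapted to tuples of independent complex Wishart matrices. Given a fixed non-commutative polynomial $Q$, construct a self-adjoint linearization: a matrix pencil $L = a_0 \otimes 1_\cF + \sum_k a_k \otimes y_k$ with scalar coefficients $a_k$ such that the invertibility of $L(\BW_1,\ldots,\BW_n) - z\Id$ at a spectral parameter $z$ encodes the singular values of $Q(\BW_1,\ldots,\BW_n) - z\Id$. The problem then reduces to estimating the matrix-valued Stieltjes transform
\[
G_P(z) = \bigl(\mathrm{id}\otimes P^{-1}\Tr\bigr)\bigl[(z\Id - L(\BW_1,\ldots,\BW_n))^{-1}\bigr]
\]
and comparing it to its free-probabilistic analogue $G_\infty(z) = (\mathrm{id}\otimes \nu)[(z\Id - L(\fPois_1,\ldots,\fPois_n))^{-1}]$. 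Gaussian integration by parts produces a closed master equation for $\BBE G_P(z)$ which matches the matrix Dyson equation for $G_\infty(z)$ up to an error of size $O(P^{-2})$; combined with an \emph{a priori} resolvent bound and Gaussian Poincar\'e concentration controlling the fluctuations of $G_P(z)$ about its mean, this yields $\|G_P(z) - G_\infty(z)\|$ small almost surely, uniformly on contours at distance $\gtrsim P^{-1/2}$ from the limiting spectrum. A Helffer--Sj\"ostrand argument converts the resolvent bound into absence of eigenvalues of $L(\BW_1,\ldots,\BW_n)$ outside an arbitrary neighborhood of the spectrum of $L(\fPois_1,\ldots,\fPois_n)$, which is equivalent to the operator-norm convergence.

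The main obstacle, as in any strong convergence proof, is ruling out outliers slightly beyond the edge of the limiting spectrum: no finite number of trace moments detects such rare eigenvalues, which is precisely why one cannot bypass the matrix Dyson equation in favour of the softer moment method. A secondary technical point is that $Q$ need not be self-adjoint, so one works throughout with the Hermitization $\bigl(\begin{smallmatrix} 0 & Q \\ Q^* & 0\end{smallmatrix}\bigr)$ and controls the smallest singular value of $Q(\BW_1,\ldots,\BW_n) - \lambda\Id$ as $\lambda$ approaches the predicted spectrum. Once these edge estimates are in place along the lines of \cite{CDM07}, both the trace convergence and the operator-norm convergence follow, completing the proof.
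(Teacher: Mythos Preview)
The paper does not prove this theorem at all: it is stated as ``the following Theorem from \cite{CDM07}'' and used as a black box throughout Section~\ref{sec:opconvergence}. There is no proof in the paper to compare against.

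Your outline is a faithful high-level sketch of the Capitaine--Donati-Martin argument (itself an adaptation of the Haagerup--Thorbj\o rnsen method to Wishart ensembles): linearize $Q$ to a self-adjoint matrix pencil, compare the matrix-valued resolvent to the subordination/Dyson-equation limit via Gaussian integration by parts with an $O(P^{-2})$ error, control fluctuations by the Gaussian Poincar\'e inequality, and exclude outliers outside any neighborhood of the limiting spectrum. That is indeed the content of \cite{CDM07}, and the rate hypothesis $|P/N-\gamma|\le K/P^2$ in Definition~\ref{def:matmodel} is there precisely so that the aspect-ratio perturbation does not spoil the $O(P^{-2})$ error term in the master equation. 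So your proposal is correct in spirit, but it is a summary of the cited reference rather than something the present paper undertakes; for the purposes of this paper the appropriate ``proof'' is simply the citation.
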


In order for this theorem to imply our desired results, we will use the 
fact that $\Har$ can be approximated by polynomials in the matrices 
$\BW_i$. A concentration bound on the largest eigenvalues of both 
$\BW_i$ and $\BW_i^{-1}$ is necessary before proceeding. We prove this 
Lemma for matrices satisfying Definition~\ref{def:matmodel} and 
Definition~\ref{def:altmatmodel}.
\begin{lemma}
\label{lem:polyapprox}
Let $\{ \BW_i \}$ satisfy Definition~\ref{def:matmodel} or 
Definition~\ref{def:altmatmodel}. Then there exists a deterministic 
constant $\kappa>0$ that depends only on $n$ and the subgaussian 
parameter of the entries of $\BX_i$ such that the event 
\[
B_P := \big\{\max \big(\|\BW_i\|, \|\BW_i^{-1}\|, \|\Har\|, 
\|\Har^{-1}\|\big) > \kappa \big\}
\]
satisfies
\[
\sum_{P=1}^\infty\Prob(B_P) < \infty.
\]
\end{lemma}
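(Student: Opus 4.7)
The plan is to control each of the four quantities $\|\BW_i\|$, $\|\BW_i^{-1}\|$, $\|\Har\|$, $\|\Har^{-1}\|$ separately, show each exceeds a suitable threshold with probability exponentially small in $P$, and then combine them via a union bound so that the $P$-sum of $\Prob(B_P)$ is finite by a geometric comparison. Under either Definition~\ref{def:matmodel} or \ref{def:altmatmodel}, the entries of $\BX_i$ are i.i.d.\ subgaussian with a uniform subgaussian parameter (complex Gaussians in the first case), so the required concentration inequalities for rectangular random matrices are available off the shelf from \cite{V18} and \cite{RV08}.

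First, for the largest singular value, I would invoke the standard non-asymptotic bound for the operator norm of a subgaussian matrix (Theorem~4.4.5 in \cite{V18}): there exist constants $C_1,c_1>0$, depending only on the subgaussian parameter, such that
\[
\Prob\big(\|\BX_i\| \geq C_1(\sqrt{N}+\sqrt{P}+t)\big) \leq 2\exp(-c_1 t^2).
\]
Taking $t=\sqrt{P}$ and using $P/N \to \gamma \in (0,1)$, this yields a deterministic $\kappa_1$ with $\Prob(\|\BW_i\| > \kappa_1) \leq 2\exp(-c_1 P)$. Second, for the smallest singular value, I would apply the Rudelson--Vershynin bound (Theorem~1.2 in \cite{RV08}): since the aspect ratio satisfies $P/N \to \gamma < 1$, we have $N - P+1 \geq (1-\gamma) N/2$ for $P$ large, and choosing $\varepsilon$ sufficiently small gives constants $\kappa_2,c_2 > 0$ with $\Prob\big(s_{\min}(\BX_i/\sqrt{N}) < \kappa_2^{-1/2}\big) \leq \exp(-c_2 P)$, i.e.\ $\Prob(\|\BW_i^{-1}\| > \kappa_2) \leq \exp(-c_2 P)$.

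Third, for $\Har$ I would use the AMHM inequality~\eqref{eqn:matamhmineq}, which gives $\Har \preceq \Ave$ and hence $\|\Har\| \leq \|\Ave\| \leq n^{-1}\sum_{i=1}^n \|\BW_i\|$; in particular $\|\Har\| \leq \max_i \|\BW_i\| \leq \kappa_1$ on the intersection of the good events. Fourth, from the definition~\eqref{eqn:harmdef} we have $\Har^{-1} = n^{-1}\sum_i \BW_i^{-1}$, so $\|\Har^{-1}\| \leq n^{-1}\sum_i \|\BW_i^{-1}\| \leq \kappa_2$ on the same intersection.

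Setting $\kappa := \max(\kappa_1,\kappa_2)$, a union bound over $i=1,\dots,n$ gives $\Prob(B_P) \leq 2n\exp(-c_1 P) + n\exp(-c_2 P)$, which is summable. The main technical obstacle is the lower bound on the smallest singular value: the Gaussian case is straightforward from Edelman-type formulas, but to accommodate Definition~\ref{def:altmatmodel} one needs the full Rudelson--Vershynin estimate, which is why the hypothesis $\gamma \in (0,1)$ (strict inequality) is essential, as it ensures the rectangular matrix is genuinely not square and the small-ball term in the Rudelson--Vershynin bound remains exponentially small.
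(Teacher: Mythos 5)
Your proposal is correct and follows essentially the same route as the paper: reduce $\|\Har\|$ and $\|\Har^{-1}\|$ to the $\|\BW_i\|$ and $\|\BW_i^{-1}\|$ via the AMHM inequality and the explicit formula $\Har^{-1}=n^{-1}\sum_i\BW_i^{-1}$, then apply Vershynin's $\epsilon$-net bound for the largest singular value and the Rudelson--Vershynin (resp.\ an Edelman/Silverstein-type) estimate for the smallest singular value in the subgaussian (resp.\ complex Gaussian) case, and sum the exponential tails. The only cosmetic difference is that the paper spells out the complex Gaussian case via the Dumitriu--Edelman bidiagonal representation and a Ger\v{s}gorin argument, which you gesture at but do not detail.
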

\begin{proof}
For any $t > 0$:
\begin{multline*}
\Prob\big(\big\{\max\big(\|\BW_i\|, \|\BW_i^{-1}\|, \|\Har\|, 
\|\Har^{-1}\|\big)> t \big\}\big) \\ \leq  n\Prob\big(\{\|\BW_1\|> 
t\}\big) + n\Prob\big(\{\|\BW_1^{-1}\|> t \}\big) 
+\Prob\big(\{\|\Har\|> t\}\big) + \Prob\big(\{\|\Har^{-1}\|> t\}\big),
\end{multline*}
By the AMHM inequality in \eqref{eqn:matamhmineq}, we have 
\[
\{\|\Har \| > t \}\subset \{\|\Ave\| > t\}
\]
so the triangle inequality and union bound applied to $\|\Ave\|$ gives
\[
\Prob\big(\{\|\Har\| > t \}\big)\leq n\Prob\big(\{\|\BW_1\| > t\}\big).
\]
The triangle inequality and a union bound also yield
\[
\Prob\big(\{\|\Har^{-1}\| > t \} \big) \leq n 
\Prob\big(\{\|\BW_1^{-1}\| > t \}\big),
\]
so we have
\begin{multline}
\label{eqn:tailbounds}
\Prob\big(\big\{\max\big(\|\BW_i\|, \|\BW_i^{-1}\|, \|\Har\|, 
\|\Har^{-1}\|\big)> t \big\}\big) \\ \leq 2n\Prob\big(\{\|\BW_1\|> 
t\}\big) + 2n\Prob\big(\{\|\BW_1^{-1}\|> t \}\big).
\end{multline}
There are several methods to bound the first probability
on the right-hand side of \eqref{eqn:tailbounds}.
One is an $\epsilon$-net argument that is 
described in \cite[Thereom 4.4.5]{V18}, it gives a bound of the form
\begin{equation}
\Prob\bigg(\bigg\{\|\BW_i\| > C_1c_1\bigg(\sqrt{\frac{P}{N}} + 1 + 
t\bigg)^2\bigg\}\bigg) \leq 2 \exp(-Nt^2) \label{eqn:tailbound1}
\end{equation}
here $C_1$, $c_1>0$ only depend on the subgaussian parameter of the 
entries of $\BX_i$. The above bound is clearly summable for any $t > 0$ 
fixed. Note that the $\epsilon$-net argument given in \cite{V18} is for 
the model in Definition~\ref{def:altmatmodel}, a similar argument can 
easily be made for the model in Definition~\ref{def:matmodel} with 
limited adjustments.

We take more care to bound the second probability on the right-hand 
side of \eqref{eqn:tailbounds}. It suffices to bound the smallest 
singular value of $\BX_i$ since this is equal to 
$N^{1/2}\|\BW_i^{-1}\|^{-1/2}$. We consider the complex 
Gaussian model of Definition~\ref{def:matmodel} separately from the 
real-entried model of Definition~\ref{def:altmatmodel}.

For the model in Definition~\ref{def:matmodel} we use the fact that the 
eigenvalues of $\BW_i$ have the same distribution as the 
eigenvalues of $N^{-1}\BY\BY^*$ where $\BY$ is the lower-triangular matrix
\[
\BY = \begin{bmatrix}
D_{2N}     &        &     &      \\
L_{2(P-1)} & D_{2(N-1)} &  &\\
     &  \ddots      & \ddots &  \\
	 &		  &L_2 & D_{2(N-P+1)} 
\end{bmatrix}
\]
where each $D_j$ and $L_j$ in the above matrix are independent 
$\chi^2$-distributed random variable with $j$ degrees of freedom (see 
\cite{DE02} for a derivation). With this representation, the same
Gre\^sgorin disk argument that yields \cite[Equation (2)]{S85} yields the 
lower bound
\begin{multline*}
\|\BW_i^{-1}\|^{-1} \geq \min\bigg[\frac{1}{N} (D_{2N}^2 
-D_{2N}L_{2(P-1)}), \frac{1}{N} (L_2^2 + D_{2(N-P+1)}^2- 
D_{2(N-P+2)}L_2), \\ \min_{j\leq P-2}\frac{1}{N}\Big(L_{2(P-j)}^2 + 
D^2_{2(N-j)} - (D_{2(N-j+1)}L_{2(P-j)} 
+D_{2(N-j)}L_{2(P-j-1)})\Big)\bigg],
\end{multline*}
the rest of the arguments in \cite{S85} that bound from below the right 
hand side of the above expression carry through identically and yield 
a constant $\epsilon>0$ such that the event  $\{\|\BW_i^{-1}\| > 
\epsilon\}$ is summable.

For the model in Definition~\ref{def:altmatmodel} we use 
\cite[Theorem 1.1]{RV08} which states for any $\epsilon>0$,
\[
\Prob\Big(\Big\{\sqrt{N}\|\BW_i^{-1}\|^{-\frac{1}{2}} \leq \epsilon 
\big(\sqrt{N} - \sqrt{P-1}\big)\Big\}\Big)\leq (C_2\epsilon)^{N-P+1} + 
\exp(-c_2N)
\]
where $C_2$ and $c_2> 0$ depend only on the subgaussian parameter of the 
entries of $\BX_i$.  Rearranging yields
\[
\Prob\Bigg(\Bigg\{\|\BW_i^{-1}\| \geq \frac{1}{\epsilon^2\Big(1 - 
\sqrt{\frac{P-1}{N}}\Big)^2}\Bigg\}\Bigg)\leq (C_2\epsilon)^{N-P+1} + 
\exp(-c_2N)
\]
letting $\epsilon = 1/2C_2$ ensures that $\big(C_2\epsilon\big)^{N-P}$ 
is summable in $P$, since $\frac{P}{N}\to\gamma\in(0,1)$.

Combining these bounds, we can select $\kappa > 0$ large enough so that 
both tail bounds in \eqref{eqn:tailbounds} are summable.
\end{proof}

We will now use Lemma~\ref{lem:polyapprox}, Lemma~\ref{lem:harmofpoiss} 
and Theorem~\ref{thm:strongconv} to prove the strong convergence of of 
$\Har$ to the non-commutative random variable $\fHar$.

\begin{lemma}
\label{lem:keylemma}
Assume $\{\BW_i\}$ satisfy Definition~\ref{def:matmodel}, then the 
sequence of random matrices $\Har$ converge in distribution and in the 
strong sense to the non-commutative random variable $\fHar$.
\end{lemma}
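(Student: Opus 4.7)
The plan is to transfer the convergence provided by Theorem~\ref{thm:strongconv} from polynomials in the $\BW_i$ to the rational function $\Har$ via a single polynomial approximation that is uniform in $P$, using Lemma~\ref{lem:polyapprox} to control the norms of the matrix inverses. Let $\Delta$ be the constant from the proof of Lemma~\ref{lem:harmofpoiss}, $\kappa$ the constant from Lemma~\ref{lem:polyapprox}, and set $\kappa' := \max(\Delta,\kappa)$. By Borel--Cantelli applied to $\sum_P\Prob(B_P)<\infty$, almost surely the event $B_P^c$ holds for all sufficiently large $P$, so each of $\|\BW_i\|,\|\BW_i^{-1}\|,\|\Har\|,\|\Har^{-1}\|$ is eventually bounded by $\kappa'$. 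Running the Neumann-series construction from Lemma~\ref{lem:harmofpoiss} with the common normalization $2\kappa'$ produces, for every $m$, a single non-commutative polynomial $Q_m\in\BBC\langle x_1,\ldots,x_n\rangle$ (depending only on $n$, $\kappa'$, and $m$) such that
\[
\|\fHar - Q_m(\fPois_1,\ldots,\fPois_n)\|_\cF \leq \varepsilon_m
\quad\text{and}\quad
\|\Har - Q_m(\BW_1,\ldots,\BW_n)\| \leq \varepsilon_m \text{ on } B_P^c,
\]
with $\varepsilon_m\to 0$. The point is that the error estimates in Lemma~\ref{lem:harmofpoiss} depend only on the upper bound $\kappa'$ on the relevant norms, so the identical derivation carried out in $\mathrm{Mat}_P(\BBC)$ on the good event works automatically.

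Given this simultaneous approximation, Theorem~\ref{thm:strongconv} applies to the fixed polynomial $Q_m$ and yields, for every $m$ and every $k\geq 1$, the almost sure convergences
\[
\|Q_m(\BW_1,\ldots,\BW_n)\| \to \|Q_m(\fPois_1,\ldots,\fPois_n)\|_\cF, \qquad
\tfrac{1}{P}\Tr Q_m(\BW_1,\ldots,\BW_n)^k \to \nu\bigl[Q_m(\fPois_1,\ldots,\fPois_n)^k\bigr].
\]
On the almost sure event where $B_P^c$ holds eventually, the reverse triangle inequality (combined with $|\tfrac{1}{P}\Tr\BA|\leq\|\BA\|$ and a telescoping bound for $\Har^k-Q_m(\BW)^k$ using the uniform norm bound $\kappa'+\varepsilon_m$) gives
\[
\limsup_{P,N\to\infty}\bigl|\|\Har\|-\|\fHar\|_\cF\bigr| \leq 2\varepsilon_m,\qquad
\limsup_{P,N\to\infty}\bigl|\tfrac{1}{P}\Tr\Har^k - \nu(\fHar^k)\bigr|\leq C_k\varepsilon_m,
\]
for a constant $C_k$ depending only on $n$, $k$, $\kappa'$. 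Sending $m\to\infty$ delivers both the strong convergence $\|\Har\|\to\|\fHar\|_\cF$ and the convergence in distribution $\tfrac{1}{P}\Tr\Har^k\to\nu(\fHar^k)$ almost surely, which is the conclusion.

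The main obstacle is producing the \emph{same} polynomial $Q_m$ that simultaneously approximates $\fHar$ in $\cF$ and $\Har$ in $\mathrm{Mat}_P(\BBC)$ with an error that does not drift with $P$. This forces a unified normalization constant $\kappa'$ which dominates both the free and matrix sides; Lemma~\ref{lem:polyapprox} is precisely what makes this possible, since it supplies a deterministic $\kappa$ uniform in $P$. Once the joint approximation is in hand, the remainder is a routine $\varepsilon/2$ closure argument against Theorem~\ref{thm:strongconv}.
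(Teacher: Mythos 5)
Your proposal is correct and follows essentially the same route as the paper: a Neumann-series polynomial approximation of $\Har$ and $\fHar$ with a single deterministic normalization constant dominating both the matrix and free sides (the paper likewise takes $\kappa$ larger than $\Delta$), validity of the approximation on the good event from Lemma~\ref{lem:polyapprox}, and a three-term triangle-inequality closure against Theorem~\ref{thm:strongconv}, with the reduction of general monomials to the identity via the same telescoping bound.
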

\begin{remark}
\label{rem:generalization}
Note that the proof of this Lemma is restricted to matrices satisfying 
Definition~\ref{def:matmodel} only due to the application of 
Theorem~\ref{thm:strongconv} since Lemma~\ref{lem:polyapprox} was 
proven for the models in Definition~\ref{def:matmodel} and 
Definition~\ref{def:altmatmodel}. If Theorem~\ref{thm:strongconv} is 
extended to the models in Definition~\ref{def:altmatmodel}, then this 
Lemma would automatically apply and Theorem~\ref{theorem:main} would 
also extend to the matrices in Definition~\ref{def:altmatmodel}.
\end{remark}
\begin{proof}
We first show for any monomial $S \in \BBC\langle x_1 \rangle$ 
\[
\|S(\Har)\| \to \|S(\fHar)\|_{\cF},\quad \mathrm{a.s.}
\]
It suffices to prove the above convergence for the monomial $S(x) = x$ 
since for matrices $\BM_1$ and $\BM_2$:
\[
\|\BM_1^k - \BM_2^k\| \leq \|\BM_1 - \BM_2\| \sum_{j=0}^{k-1} 
\|\BM_1\|^{j}\|\BM_2\|^{k-j}
\]
for any $k\geq 1$ so the approximation argument we use below will carry 
through for general $S$.
  
By Lemma~\ref{lem:polyapprox}, on the event $B_P^c$, $\Har$ can be 
expanded as a Neumann series in $\Har^{-1}$:
\[
\Har = \frac{1}{2\kappa} \sum_{l=0}^\infty \bigg(\Id -  
\frac{\Har^{-1}}{2\kappa}\bigg)^l,
\]
Note that on $B_P^c$ the convergence rate of the partial sum is 
explicit and deterministic:
\[
\bigg\|\Har - \frac{1}{2\kappa} \sum_{l=0}^m \bigg(\Id - 
\frac{\Har^{-1}}{2\kappa}\bigg)^l\bigg\| = \bigg\|\Har \bigg(\Id - 
\frac{\Har^{-1}}{2\kappa}\bigg)^{m+1}\bigg\| \leq 
\frac{\kappa}{2^{m+1}}.
\]
The inverse of $\Har$ can also be expanded into a series,
\[
\Har^{-1} = \frac{1}{n} \sum_{j=1}^n \BW_j^{-1} =
\frac{1}{n}\sum_{j=1}^n\frac{1}{2\kappa}\sum_{l=0}^\infty \bigg(\Id -
\frac{\BW_j}{2\kappa} \bigg)^l,
\] 
and similar explicit deterministic convergence rates can be derived.
It follows that there is a sequence of non-commutative polynomials $Q_d 
\in \BBC\langle x_1,\ldots,x_n\rangle$, whose coefficients
depend only on $n$ and $\kappa$, such that on the event $B_P^c$,
\[
\| Q_d(\BW_1,\ldots,\BW_n) - \Har\| \leq \frac{1}{d}.
\]
This implies that on $B_P^c$, as $d \to \infty$,
$Q_d(\BW_1,\ldots,\BW_n)$ converges in operator norm to $\Har$.
Furthermore, for each $d$, we have
\[
\lim_{N,P\to\infty}  \|Q_d(\BW_1,\ldots, \BW_n)\| \to 
\|Q_d(\fPois_1,\ldots,\fPois_n)\|,
\]
with probability 1 by Theorem~\ref{thm:strongconv}. Note that if we 
select $\kappa$ larger than the value $\Delta$ defined in the proof of 
Lemma~\ref{lem:harmofpoiss} we also have the bounds
\[
\|Q_d(\fPois_1,\ldots, \fPois_n) - \fHar\|_\cF \leq \frac{1}{d},
\]
since the construction of the polynomial $Q_d$ in Lemma 
\ref{lem:harmofpoiss} is identical to the one described above and 
satisfies the same bounds when $\|\cdot\|$ is replaced by $\|\cdot 
\|_\cF$.

Next, we work on the event $B^c=\liminf B_P^c$, noting that the 
summability of $\Prob(B_P)$ implies $\Prob(\limsup B_P) = 0$. As $d\to 
\infty$, $Q_d(\fPois_1,\ldots, \fPois_n)$ converges to $\fHar$ in the 
norm $\|\cdot\|_\cF$. Triangle inequality implies
\begin{equation*}
\begin{split}
\big|\|\Har\| - \|\fHar\|_\cF\big| \leq& \big|\|\Har\| - 
\|Q_d(\BW_1,\ldots, \BW_n)\|\big| \\&+\big|\|Q_d(\BW_1,\ldots,\BW_n)\| 
- \|Q_d(\fPois_1,\ldots,\fPois_n)\|_\cF \big|\\ &+ 
\big|\|Q_d(\fPois_1,\ldots,\fPois_n)\|_\cF - \|\fHar\|_\cF\big|,
\end{split}
\end{equation*}
Since $B^c$ occurs with probability $1$, the first term is bounded by 
$\frac{1}{d}$ as $P\to\infty$ by construction of $Q_d$. 
The second term vanishes as $P\to \infty$ by Theorem 
\ref{thm:strongconv}. For any $\epsilon > 0$, there is a deterministic 
$d$ large enough that makes the third term smaller than $\epsilon$ in 
the above inequality. Therefore for arbitrary $d$ and $\epsilon > 0$:
\[
\limsup_{P\to\infty}\big|\|\Har\| - 
\|\fHar\|_\cF\big|\leq\frac{1}{d}+\epsilon,
\]
the result then follows.

The convergence 
\[
\frac{1}{P}\Tr S(\Har)  \to \nu\big(S(\fHar)\big)\quad \mathrm{a.s.}
\]
follows from a similar argument. Again without loss of generality 
assume $S(\Har) = \Har$, and write
\[
\begin{split}
\bigg|\frac{1}{P} \Tr \Har	 - \nu(\fHar)\bigg| \leq & 
\frac{1}{P}\big|\Tr\Har - \Tr Q_d(\BW_1,\ldots,\BW_n)\big|  \\ 
&+\bigg|\frac{1}{P}\Tr Q_d(\BW_1,\ldots,\BW_n) - 
\nu\big(Q_d(\fPois_1,\ldots,\fPois_n)\big)\bigg|\\ 
&+\Big|\nu\big(Q_d(\fPois_1,\ldots,\fPois_n)\big) - \nu(\fHar)\Big|.
\end{split}
\]
The first term is bounded by $\|\Har - Q_d(\BW_1,\ldots,\BW_n)\|$ which 
on $B^c$ is bounded by $\frac{1}{d}$ as $P \to \infty$. The second term 
goes to 0 with probability 1 as $P\to\infty$ by Theorem 
\ref{thm:strongconv}. By Lemma~\ref{lem:harmofpoiss} for any $\epsilon 
> 0$ there is a deterministic $d$ such that for $d$ large enough the 
third term is bounded by $\epsilon$.
\end{proof}
\section{Harmonic Mean of Free Poisson Random Variables}
\label{sec:proofofmain}

In Sections~\ref{sec:free} and~\ref{sec:opconvergence} we proved that 
the limiting spectral measure of $\Har$ is the law of the 
non-commutative random variable $\fHar$. Additionally, we proved 
\[
\|\Har - \Id\| \to \|\fHar - 1_\cF\|_\cF \quad\mathrm{a.s.}
\]
We can conclude the proof of Theorem~\ref{theorem:main} by computing 
the distribution of $\fHar$ and the value of $\|\fHar -1_\cF\|_\cF$, 
which follows from a now standard type of calculation from free 
probability theory, which is called \emph{additive free convolution}
\cite{V86}. 

Let $\sigma$ be a compactly supported probability measure on
$\BBR$. The Cauchy-Stieltjes transform of $\sigma$ is denoted
\[
m_{\sigma}(z) := \int_\BBR \frac{\sigma(\diff x)}{z-x}.
\]
Let $K_{\sigma}(z)$ be the functional inverse of $m_{\sigma}(z)$. 
We define the $R$-transform of $\sigma$ as
\[
R_{\sigma}(z) := K_{\sigma}(z) - \frac{1}{z}.
\]
For two compactly supported probability measures $\sigma_1$ and 
$\sigma_2$, on $\BBR$, the additive free convolution of $\sigma_1$ and 
$\sigma_2$, denoted $\sigma_1 \boxplus \sigma_2$, is the unique 
probability measure obtained by the relation
\[
R_{\sigma_1\boxplus\sigma_2}(z)= R_{\sigma_1}(z) + R_{\sigma_2}(z).
\]
The additive free convolution is significant in free probability
because if $\mu_a$ and $\mu_b$ are the laws of two freely independent
non-commutative random variables $a$ and $b$, respectively, then the
law of the non-commutative random variable $a+b$ is the measure $\mu_a
\boxplus \mu_b$.  Note that for notational ease in what follows, we
use $R_{a}$ to denote $R$-transform of the (compactly supported)
measure that is the law of the non-commutative random variable $a$.
Here, we use the additive free convolution to compute the law of
$\fHar$ via the following steps:
\begin{enumerate}

\item We use the Cauchy-Stieltjes transform of each $\fPois_i$ to
  compute the Cauchy-Stieltjes transform of $\fPois_i^{-1}$. The fixed
  point equation for the Cauchy-Stieltjes transform of $\fPois_i$ is a
  quadratic equation. This results in a fixed point equation for
  $\fPois_i^{-1}$ which is also a quadratic equation.

\item Using the definition of the $R$-transform above, we obtain a
  quadratic fixed point equation for the $R$-transform of each
  $\fPois_i^{-1}$.

\item Because each $\fPois_i$ is freely independent of any other
  $\fPois_j$ for $i \neq j$, $\fPois_i^{-1}$ is freely independent of
  $\fPois_j^{-1}$.  We may now compute the $R$-transform
\[
R_{\fPois_1^{-1} \boxplus \cdots \boxplus \fPois_n^{-1}}(z) = n
R_{\fPois^{-1}}(z)
\]
where $\fPois$ has the same law as all of the $\fPois_i$.

\item With the $R$-transform of $\fPois_1^{-1} + \cdots +
  \fPois_n^{-1}$ in hand, we compute the Cauchy-Stieltjes transform of
  $\fHar$. As a consequence of steps 1--3, this function satisfies a
  quadratic fixed point equation which can be solved.

\item We invert the Cauchy-Stieltjes transform of $\fHar$ using the
  usual Plemelj inversion formula. This gives the law of $\fHar$,
  which upon shifting by $1$ and using faithfulness of the state $\nu$
  yields the operator norm of $\fHar - 1_\cF$.
\end{enumerate} 

Our approach in the calculations outlined above is from the paper
\cite{RE07}, which provides a general framework for computing various
transforms for non-commutative random variables whose Stieltjes
transforms satisfy polynomial equations.  In our case, each
$m_{\fPois}$ satisfies the following fixed point equation \cite{MP67}
\begin{equation}
\label{eqn:quadmp}
\gamma z m_{\fPois}(z)^2 + m_{\fPois}(z)(1 - z - \gamma) +1 = 0.
\end{equation}

\begin{proof}[Proof of Theorem~\ref{theorem:main}]
Denote the Cauchy-Stieltjes transform of the law of each 
$\fPois_i$ by
\[
m_{\fPois_i}(z) := \int_{\BBR} \frac{\rho_{\mathrm{MP},\gamma}(\diff 
x)}{z-x}.
\]
To obtain the law of $\fHar$, we first compute the law of
\[
n\fHar^{-1}=\sum_{i=1}^n \fPois_i^{-1},
\]
which is the additive free convolution of the $n$ freely independent
random variables $\{ \fPois_i^{-1} \}$. Since they all have the same
parameter $\gamma>0$, we need only compute, for a fixed $\fPois$ with
the same law as $\fPois_i$, the $R$-transform
\begin{equation}
\label{eqn:rofhar}
R_{n \fHar^{-1}}(z) = n R_{\fPois^{-1}}(z).
\end{equation}
With the law of $n \fHar^{-1}$ in hand, we simply invert and rescale
to obtain the law of $\fHar$, which allows us to compute the value of
$\|\fHar - 1_\cF\|_\cF$.

The law of $\fPois^{-1}$ is the push-forward measure of 
$\rho_{\mathrm{MP},\gamma}$ by the mapping $x \mapsto \frac{1}{x}$.
We denote this measure by $\mu_{\fPois^{-1}}$.
Using the push-forward, we have 
\begin{align*}
m_\fPois(z) &= \int_{\BBR}\frac{\mu_{\fPois^{-1}}(\diff x)}{z 
-\frac{1}{x}} = \int_{\BBR} \frac{x\mu_{\fPois^{-1}}(\diff x)}{xz-1}\\ 
&= \int_\BBR\bigg\{\frac{1}{z}-\frac{1}{z^2}\frac{1}{\frac{1}{z} - 
x}\bigg\}\mu_{\fPois^{-1}}(\diff x)\\ &= \frac{1}{z} - \frac{1}{z^2} 
m_{\fPois^{-1}}\bigg(\frac{1}{z}\bigg),
\end{align*}
rearranging and replacing $z$ with $\frac{1}{z}$ yields
\begin{equation}
\label{eqn:firststeqn}
m_{\fPois}\bigg(\frac{1}{z}\bigg) = z - z^2 m_{\fPois^{-1}}(z).
\end{equation}

The fixed point equation \eqref{eqn:quadmp} for $m_\fPois$ can be 
rewritten as
\begin{equation}
\label{eqn:modfix}
\frac{\gamma}{z} m_\fPois\bigg(\frac{1}{z}\bigg)^2 + \bigg( 1 - \gamma -
\frac{1}{z}\bigg)m_\fPois\bigg(\frac{1}{z}\bigg) + 1 = 0,
\end{equation}
inserting \eqref{eqn:firststeqn} into \eqref{eqn:modfix} yields
\[
\gamma z\big(1-zm_{\fPois^{-1}}(z)\big)^2+\big(z(1-\gamma)-1\big)\big(1-
zm_{\fPois^{-1}}(z)\big)+1 = 0,
\]
which when simplified yields
\begin{equation}
\label{eqn:quadforpushforward}
\gamma z^2m_{\fPois^{-1}}(z)^2-m_{\fPois^{-1}}(z)\big[ 
z(1+\gamma)-1\big]+1=0.
\end{equation}

Equation \eqref{eqn:quadforpushforward} yields the following equation 
when for $K_{\fPois^{-1}}(z)$ 
\[
\gamma z^2K_{\fPois^{-1}}(z)^2-(1 + \gamma)zK_{\fPois^{-1}}(z)+z+1=0,
\]
substituting the $R$-transform $R_{\fPois^{-1}}(z)$ gives the equation
\[
\gamma z^2 \bigg(R_{\fPois^{-1}}(z) +\frac{1}{z}\bigg)^2 - (1+\gamma) z
\bigg(R_{\fPois^{-1}}(z) + \frac{1}{z}\bigg) + z + 1 = 0,
\]
and simplifying further gives
\begin{equation}
\label{eqn:quadforR}
\gamma z R_{\fPois^{-1}}(z)^2 + (\gamma - 1) R_{\fPois^{-1}}(z) + 1 = 0.
\end{equation}

Using the additive convolution formula \eqref{eqn:rofhar} in 
\eqref{eqn:quadforR} gives
\begin{equation}
\label{eqn:quadforRofhar}
\gamma zR_{n\fHar^{-1}}(z)^2+n(\gamma-1)R_{n\fHar^{-1}}(z) + n^2 = 0.
\end{equation}
We will solve for $m_{n\fHar^{-1}}$, by reversing the procedure we 
performed above to obtain the $R$-transform given the Stieltjes 
transform. Inserting the definition of the $R$-transform into 
\eqref{eqn:quadforRofhar} gives
\[
\gamma z\bigg(K_{n\fHar^{-1}}(z)-\frac{1}{z}\bigg)^2 + 
n(\gamma-1)\bigg(K_{n\fHar^{-1}}(z)-\frac{1}{z}\bigg) + n^2 = 0,
\]
simplifying this gives
\[
\gamma z^2K_{n\fHar^{-1}}(z)^2 +
\big\{(n-2)\gamma-n\big\}zK_{n\fHar^{-1}}(z)+n^2z-(n-1)\gamma+n=0,
\]
so that
\[
\gamma z^2 m_{n\fHar^{-1}}(z)^2 +\big\{(n-2)\gamma - n\big\} z 
m_{n\fHar^{-1}}(z) + n^2 \tilde m(z) - (n-1)\gamma + n= 0,
\]
changing variables $z \mapsto \frac{1}{z}$ gives
\begin{equation}
\label{eqn:fixinvhar}
\frac{\gamma}{z^2} m_{n\fHar^{-1}}\bigg(\frac{1}{z}\bigg)^2
+ \bigg\{\frac{(n-2)\gamma - n}{z} + n^2\bigg\}
m_{n\fHar^{-1}}\bigg(\frac{1}{z}\bigg)
- (n-1)\gamma + n= 0.
\end{equation}
As in the pushforward calculation that gave \eqref{eqn:firststeqn} 
before, we have the relationship
\[
m_{n\fHar^{-1}}\bigg(\frac{1}{z}\bigg)  = z - z^2 m_{n^{-1}\fHar}(z),
\]
which when substituted into \eqref{eqn:fixinvhar} gives
\[
\gamma(1 - zm_{n^{-1}\fHar}(z))^2 +\big\{(n-2)\gamma - n + n^2z\big\}(1 
- z m_{n^{-1}\fHar}(z)) - (n-1)\gamma + n= 0
\]
which simplifies to the quadratic
\[
\gamma z m_{n^{-1}\fHar}(z)^2 +n\{1-\gamma - n z\} 
m_{n^{-1}\fHar}(z)+n^2=0,
\]
rescaling the law of $n^{-1} \fHar$ gives the final equation for 
$m_{\fHar}$:
\begin{equation}
\label{eqn:harmonicmeanstieltj}
\frac{\gamma z}{n} m_{\fHar}(z)^2+\{1-\gamma-z\} m_{\fHar}(z)+1=0.
\end{equation}

The solution to the quadratic equation \eqref{eqn:harmonicmeanstieltj}
is
\begin{equation}
m_{\fHar}(z) = \frac{n\Big\{z - 1 + \gamma - \sqrt{ z^2 -2z(1-\gamma + 
\frac{2 \gamma}{n}) + (1-\gamma)^2}\Big\}}{2\gamma z},
\end{equation}
where the branch cut of the square root has been taken to be the 
positive real line. We have chosen this particular root of the 
quadratic due to the decay condition $m_{\fHar}(z) \sim \frac{1}{z}$ as 
$z \to \infty$ and the requirement that $m_{\fHar}(z)$ must be complex 
analytic off the real line. See, for example, \cite[\S2.4.3]{T10} for a 
more detailed calculation (for Wigner matrices) that explains the 
selection of the branch cut when solving fixed point equations for the 
Stieltjes transform. See \cite[\S3.3]{BS10} for a derivation of the 
MP-law using these techniques. To recover the law $\mu_{\fHar}$ from 
the above Stieltjes transform, we follow the usual inversion formula,
which appears in \cite[Theorem 2.4.3]{AGZ10},
\[
\lim_{y\to 0}-\frac{1}{\pi}\int_a^b \Im m_{\fHar}(x + iy) \diff x = 
\int_a^b \mu_\fHar(\diff x),
\]
where $a < b$ are continuity points of the measure $\mu_\fHar$. By 
computing directly, we get that $\mu_\fHar$ is absolutely continuous 
with respect to Lebesgue measure with density
\[
\frac{\diff \mu_\fHar}{\diff x} = \frac{n}{2\pi \gamma x} 
\sqrt{(e_+-x)(x-e_-)} \mathbf{1}_{[e_-,e_+]}(x)
\]
where $e_\pm$ are defined in Theorem~\ref{theorem:main}. Using 
faithfulness of the state $\nu$, we may conclude that the operator norm 
of $\fHar -1_\cA$ is the largest element in absolute value of the 
support of the measure $\mu_\fHar$ after it has been shifted to the 
left by one:
\[
e_\pm - 1 = -\gamma + \frac{2\gamma}{n} \pm 2 
\sqrt{\frac{\gamma}{n}}\sqrt{1 - \gamma + \frac{\gamma}{n}},
\]
the choice of $-$ sign makes the absolute value largest:
\begin{equation}
\|\fHar - 1_\cF\|_\cF = 
\gamma - \frac{2\gamma}{n} + 2\sqrt{\frac{\gamma}{n}}\sqrt{1 - \gamma
+ \frac{\gamma}{n}},
\end{equation}
this concludes the proof of Theorem~\ref{theorem:main}.
\end{proof}
\section{General Covariance Matrix}
\label{sec:generalcovar}

From the last Section, we know that $\Har$ converges in the strong
sense to a non-commutative random variable, $\fHar$, whose law we
computed in the previous section. As mentioned in the Introduction, we
can study the harmonic mean of general population $\BSigma$ by
multiplication $\BSigma^\frac{1}{2}\Har \BSigma^\frac{1}{2}$. In this
section, we will obtain a fixed point equation for both the limiting
spectral measure of $\BSigma^{\frac{1}{2}}\Har\BSigma^{\frac{1}{2}}$
and its centered version
$\BSigma^{\frac{1}{2}}\Har\BSigma^{\frac{1}{2}} - \BSigma$ in terms of
the limiting cdf $F$ of $\BSigma$ assuming $(\fHar,\BSigma)$ converge
as a set of non-commutative freely independent random variables
$(\fCovar,\fHar)$, where $\fCovar$ has law given by the measure $\diff
F$.

We use another tool from free probability called the 
\emph{multiplicative free convolution} \cite{V87}. To define the 
$S$-transform, for a non-commutative random variable $a$ in some 
non-commutative $C^*$-probability space $(\cA, \|\cdot\|,*, \phi)$ 
define the function
\[
g_a(z) := \sum_{n=1}^\infty \phi(a^n) z^n,
\]
we will assume the law $\mu_a$ of $a$ is a compactly supported measure
supported on $\BBR$. We have the relationship
\begin{equation}
\label{eqn:srel1}
g_a(z) = \frac{1}{z}m_a\bigg(\frac{1}{z}\bigg) - 1.
\end{equation}
Assume $\phi(a) \neq 0$ so that $\ell_a(z)$ is guaranteed to
exist and is the functional inverse of $g_a(z)$:
\[
\ell_a\big(g_a(z)\big) = g_a\big(\ell_a(z)\big) = z.
\]
The $S$ transform of a non-commutative random variable $a$ is defined as
\begin{equation}
\label{def:stransform}
S_a(z) := \frac{1+z}{z} \ell_a(z).
\end{equation}
For freely independent non-commutative random variables $a$ and $b$ 
with $\phi(a) \neq 0$ and $\phi(b) \neq 0$, we have the 
rule
\begin{equation}
\label{eqn:stransrule}
S_{ab}(z) = S_a(z) S_b(z).
\end{equation}
Supposing the law of both $a$ and $b$ are known, $\mu_a$ and $\mu_b$
respectively. We will derive a fixed point equation for the Stieltjes
transform $m_a(z)$ in terms using the formula
\eqref{eqn:stransrule}. First, note that \eqref{eqn:stransrule} can
be written as
\begin{equation}
\label{eqn:srel2}
\ell_{ab}(z) = \ell_a(z) S_{b}(z)
\end{equation}
replacing $z$ with $g_{ab}\big(\frac{1}{z}\big)$ gives
\[
\frac{1}{z} = \ell_a\bigg(g_{ab}\bigg(\frac{1}{z}\bigg)\bigg) 
S_b\bigg(g_{ab}\bigg(\frac{1}{z}\bigg)\bigg),
\]
now applying \eqref{eqn:srel1} to this yields
\[
\frac{1}{z} = \ell_a\big(zm_{ab}(z) - 1\big) S_{b}\big(zm_{ab}(z) -
1\big),
\]
rearranging yields
\[
\ell_{a}\big(zm_{ab}(z) - 1\big)= \frac{1}{z S_{b}\big(zm_{ab}(z) -
  1\big)} ,
\]
applying $g_a$ on both sides yields
\[
zm_{ab}(z) - 1 = g_a\Bigg( \frac{1}{z S_{b}\big(zm_{ab}(z) -
  1\big)}\Bigg),
\]
using \eqref{eqn:srel1} once more gives
\[
m_{ab}(z) = S_b\big(z m_{ab}(z) - 1\big)
m_a\Big(z S_{b}\big(zm_{ab}(z) -1\big)\Big),
\]
which written in integral form is:
\begin{equation}
\label{eqn:fixpoint1}
m_{ab}(z) =
\int_{\BBR} \frac{\diff F(x)}{z - \frac{x}{S_{b}(z m_{ab}(z) - 1)}}.
\end{equation}
We will use \eqref{eqn:fixpoint1} to prove Theorem~\ref{theorem:fixed}.

\begin{proof}[Proof of Theorem~\ref{theorem:fixed}] 
By assumption of the Theorem, it will suffice to study the law of
\begin{align}
  \label{def:ncvar}
  \fCovar^{\frac{1}{2}}&\fHar \fCovar^{\frac{1}{2}},\\
  \label{def:centncvar}
\fError := 
\fCovar^{\frac{1}{2}} \{&\fHar - 1_\cF\}\fCovar^{\frac{1}{2}},
\end{align}
where $\fCovar^{\frac{1}{2}}$ is the square root of $\fCovar$ which
exists because $\fCovar$ can be realized as a positive bounded
self-adjoint linear operator on a Hilbert space $\cH$.  For notational
ease, define
\[
\breve\fHar:= \fHar - 1_{\cF}.
\]

It is clear that the state $\nu$ is tracial since it is the limit in
distribution of the tracial state $\varphi_P$. Therefore, deriving the
law of the variables in \eqref{def:ncvar} and \eqref{def:centncvar}
is the same as deriving the law of
\[
  \fCovar\fHar \quad\hbox{and}\quad \fCovar(\fHar - 1_\cF),
\]
respectively. Furthermore, it is clear that $\nu(\fCovar) > 0$ and
by direct computation we have
\[
\begin{split}
\nu(\fHar) &= \int_{e_-}^{e_+} \frac{n\sqrt{(e_+ - 
x)(x-e_-)}}{2\pi\gamma} \diff x = \frac{n(e_+-e_-)^2}{2\pi\gamma} 
\int_0^1\sqrt{y}\sqrt{1-y} \diff y\\ &= \frac{n(e_+-e_-)^2}{2\pi\gamma} 
\frac{\Gamma\big(\frac{3}{2}\big)^2}{\Gamma(3)} = 
\frac{n(e_+-e_-)^2}{16\gamma} \neq 1,
\end{split}
\]
so both $\nu(\breve\fHar) \neq 0$ and $\nu(\fHar) \neq 0$. Hence we 
have the equations
\begin{align*}
S_{\fCovar \fHar}(z) &= S_{\fCovar}(z) S_{\fHar}(z),\\
S_{\fError}(z) &= S_{\fCovar}(z) S_{\breve{\fHar}}(z).
\end{align*}

We derive the fixed point equation for $\fCovar\fHar$ first.  From the
previous section,
\[
\frac{\gamma z}{n} m_\fHar(z)^2 + (1 - \gamma -z)m_\fHar(z) + 1  = 0,
\]
replacing $z$ with $\frac{1}{z}$ and applying \eqref{eqn:srel1} gives
\[
\frac{\gamma z}{n} (g_{\fHar}(z) +1)^2 + \big[z(1-\gamma) -
  1\big]\big(g_\fHar(z) + 1\big) + 1 =0,
\]
replacing $z$ with $\ell_\fHar(z)$ gives
\[
\frac{\gamma \ell_\fHar(z)}{n} (z + 1)^2 + 
\big[\ell_{\fHar}(z)(1-\gamma) - 1\big] \big(z + 1 \big) + 1 = 0,
\]
and solving for $\ell_\fHar(z)$ gives
\[
\ell_{\fHar}(z) = \frac{z}{z+1} \frac{1}{\frac{\gamma z}{n}+ 1 - 
\gamma\big(1-\frac{1}{n}\big)},
\]
which yields the simple formula
\[
S_{\fHar}(z) = \frac{1}{\frac{\gamma z}{n} + 1 - \gamma\big(1 - 
\frac{1}{n}\big)},
\]
applying equation \eqref{eqn:fixpoint1} gives the required result.

For the second limit equation, since
\[
m_{\breve\fHar}(z) = m_\fHar(z+1),
\]
it follows
\[
\frac{\gamma (z+1)}{n}m_{\breve\fHar}(z)^2-(\gamma 
+z)m_{\breve\fHar}(z)+1=0,
\]
replacing $z$ with $\frac{1}{z}$ and applying \eqref{eqn:srel1} yields
\[
\frac{\gamma z(z+1)}{n} (g_{\breve\fHar}(z)+1)^2 - (\gamma z + 
1)(g_{\breve\fHar}(z) + 1) + 1 = 0,
\]
replacing $z$ with $\ell_{\breve \fHar}(z)$ gives the polynomial 
\[
\frac{\gamma \ell_{\breve\fHar}(z)(\ell_{\breve\fHar}(z) + 1)}{n}
\big(z + 1\big)^2 - \big(\gamma \ell_{\breve\fHar}(z) + 1\big)(z+1)
+ 1 = 0
\]
rearranging this yields
\[
\frac{\gamma}{n} \ell_{\breve\fHar}(z)^2(z+1)^2 + \frac{\gamma}{n}
\ell_{\breve\fHar}(z)(1+z)^2 -\gamma \ell_{\breve\fHar}(z)(z+1) -z = 0
\]
inserting the definition of the $S$-transform in \eqref{def:stransform}
yields
\[
\frac{\gamma}{n}z^2 S_{\breve\fHar}(z)^2 + \frac{\gamma}{n}(1+z)z 
S_{\breve\fHar}(z) -\gamma z S_{\breve \fHar}(z) - z = 0
\]
since $z$ is a non-zero complex number, we can divide through by $z$ to
get
\[
\frac{\gamma z}{n} S_{\breve\fHar}(z)^2+\gamma\Big(\frac{1+z}{n} - 
1\Big)S_{\breve\fHar}(z) -1 = 0,
\]
which concludes the proof by another application of equation
\eqref{eqn:fixpoint1}.
\end{proof}


\end{document}